\date{}
\theoremstyle{definition}
\newtheorem{theorem}{Theorem}[section]
\newtheorem{lemma}[theorem]{Lemma}
\newtheorem{observation}[theorem]{Observation}
\title{On asymptotic packing of convex geometric and ordered graphs}
\author{Jiaxi Nie\thanks{Max Planck Institute for Mathematics in the Sciences, Leipzig, 04103 Germany. Email: {\tt jiaxi.nie@mis.mpg.de}}\and Erlang Surya\thanks{Department of Mathematics, University of California at San Diego, La Jolla, CA, 92093 USA. Email: {\tt esurya@ucsd.edu}. Supported by NSF grant DMS-2225631.} \and Ji Zeng\thanks{Department of Mathematics, University of California at San Diego, La Jolla, CA, 92093 USA. Email: {\tt jzeng@ucsd.edu}. Supported by NSF grant DMS-1800746.}}
\begin{document}

\maketitle

\begin{abstract}
A convex geometric graph $G$ is said to be packable if there exist edge-disjoint copies of $G$ in the complete convex geometric graph $K_n$ covering all but $o(n^2)$ edges. We prove that every convex geometric graph with cyclic chromatic number at most $4$ is packable. With a similar definition of packability for ordered graphs, we prove that every ordered graph with interval chromatic number at most $3$ is packable. Arguments based on the average length of edges imply these results are best possible. We also identify a class of convex geometric graphs that are packable due to having many ``long'' edges.
\end{abstract}

\section{Introduction}\label{sec:intro}
For two graphs $G$ and $H$, a \emph{combinatorial $G$-packing} of $H$ is a collection of edge-disjoint subgraphs of $H$ that are isomorphic to $G$. In the study of graph packing, we typically seek the maximum cardinality of a combinatorial $G$-packing of $H$, denoted by $p(H,G)$. Clearly, $p(H,G)\le |E(H)|/|E(G)|$. When equality holds, we call the corresponding collection a \emph{perfect combinatorial $G$-packing} of $H$. Due to a well-known result of Kirkman~\cite{Kirkman1847}, $K_n$ has a perfect combinatorial $K_3$-packing if and only if $n\equiv 1$ or $3\mod 6$. More generally, Wilson~\cite{wilson1975decompositions} proved that, for $n$ large enough, $K_n$ has a perfect combinatorial $G$-packing if and only if $|E(G)|$ divides $\binom{n}{2}$ and the greatest common divisor of the vertex degrees of $G$ divides $n-1$. This result is also generalized to hypergraphs \cite{glock2016existence,keevash2014existence}.

Although a perfect combinatorial $G$-packing of the complete graph $K_n$ doesn't always exist, we can pack any $G$ into complete graphs ``almost perfectly''. Let us call a graph $G$ \emph{combinatorially packable} if
\[\lim_{n\rightarrow\infty}\frac{p(K_n,G)|E(G)|}{|E(K_n)|}=1.\]
In other words, $G$ is combinatorially packable if there exist combinatorial $G$-packings of $K_n$ covering all but $o(n^2)$ edges. Using a technique known as R\"odl Nibble (see Theorem~\ref{Pippenger_Spencer}), one can easily show that all fixed graphs are combinatorially packable. See~\cite{Yuster2007} for a survey on combinatorial packing problems.

In this paper, we consider the packability of convex geometric graphs. A \emph{convex geometric graph} (shortly \emph{cgg}) $G$ is a graph with a cyclic order $\prec$ on its vertex set $V(G)$. Equivalently, it can be represented as a graph drawn in the plane such that $V(G)$ consists of points in convex position, $E(G)$ consists of straight-line segments connecting the corresponding points, and $\prec$ is the clockwise ordering of $V(G)$. Two cgg's $G$ and $H$ are \emph{isomorphic} if there is a graph isomorphism $f$ between them preserving the cyclic order, that is, $u\prec v\prec w$ if and only if $f(u)\prec f(v)\prec f(w)$. Note that any two complete cgg's of the same size are isomorphic, we use $K_n$ to denote a complete cgg on $n$ vertices. For cgg's $G$ and $H$, a \emph{convex $G$-packing} (shortly \emph{$G$-packing}) of $H$ is a collection of edge-disjoint sub-cgg's of $H$ that are isomorphic to $G$. We call a cgg $G$ \emph{convex-packable} (shortly \emph{packable}) if there exist (convex) $G$-packings of $K_n$ covering all but $o(n^2)$ edges.

In contrast to combinatorial packability, there exist cgg's that are not packable, for example, the plane cycle $C_5$ of length $5$. To see why $C_5$ isn't packable, we can consider the average length of edges. Inside a cgg $G$, an \emph{interval} of $V(G)$ is a subset that is contiguous with respect to $\prec$, and the \emph{length} $l_G(e)$ of an edge $e$ is one less than the smallest cardinality of an interval containing $e$. One can check that the average length of all edges in $K_n$ is $(1+o(1))n/4$. On the other hand, the average length of the edges in a copy of $C_5$ in $K_n$ is at most $n/5$, so the average length of all edges covered by a $C_5$-packing of $K_n$ is at most $n/5$. Hence, a $C_5$-packing can never cover all but $o(n^2)$ edges of $K_n$.

Partly based on this average length argument, Cranston, Nie, Verstra\"ete, and Wesolek~\cite{cranston2021asymptotic} recently determined all packable plane Hamiltonian cgg's. See Figure~\ref{fig:plane_hamiltonian}.

\begin{figure}[ht]
    \centering
    \begin{tikzpicture}[thick, scale=.7]
    \tikzstyle{uStyle}=[shape = circle, minimum size = 6.0pt, inner sep = 0pt,
    outer sep = 0pt, draw, fill=white]
    \tikzstyle{lStyle}=[shape = rectangle, minimum size = 20.0pt, inner sep = 0pt,
outer sep = 2pt, draw=none, fill=none]
    \tikzset{every node/.style=uStyle}
    
    \begin{scope}[xshift=-1.7in, yshift=1.5in]
    \foreach \i in {1,2,3}
    \draw (120*\i+90:1.6cm) node (v\i) {};
    \draw (v1)--(v2)--(v3)--(v1);
    
    \end{scope}
    
    \begin{scope}[xshift=0.15in, yshift=1.7in, scale=1.2]
    \foreach \i in {1,2,3,4}
    \draw (90*\i+45:1.6cm) node (v\i) {};
    \draw (v1)--(v2)--(v3)--(v4)--(v1);
    
    
    \end{scope}
    
    \begin{scope}[xshift=2.0in, yshift=1.7in, rotate=-45, scale=1.2]
            
    \foreach \i in {1,...,4}
    \draw (90*\i:1.6cm) node (v\i) {};
    
    \foreach \i/\j in
    {1/2,2/3,4/1,2/4,3/4}
    \draw (v\i) edge (v\j);
    
    \end{scope}
    
    \begin{scope}[xshift=-1.7in, yshift=-.06in, scale=.96]
        
    \foreach \i in {1,...,5}
    \draw (18+72*\i:1.6cm) node(v\i) {};
    \foreach \i/\j in
    {1/2,2/3,4/5,5/1,1/3,1/4,3/4}
    \draw (v\i) edge (v\j);
    
    \end{scope}
    
    \begin{scope}[xshift=0.15in]
    \foreach \i in {1,...,6}
    \draw (60*\i:1.6cm) node (v\i) {};
    
    \foreach \i/\j in
    {4/3,3/2,2/1,1/5,6/5,1/6,4/2,2/5,4/5}
    \draw (v\i) edge (v\j);
    
    \end{scope}
    
    \begin{scope}[xshift=2.0in]
    \foreach \i in {1,...,6}
    \draw (60*\i:1.6cm) node (v\i) {};
    
    \foreach \i/\j in
    {4/3,3/2,2/1,2/6,6/5,1/6,4/2,2/5,4/5}
    \draw (v\i) edge (v\j);
    
    \end{scope}
    
    \end{tikzpicture}
    \caption{All packable plane Hamiltonian cgg's.}
    \label{fig:plane_hamiltonian}
\end{figure}
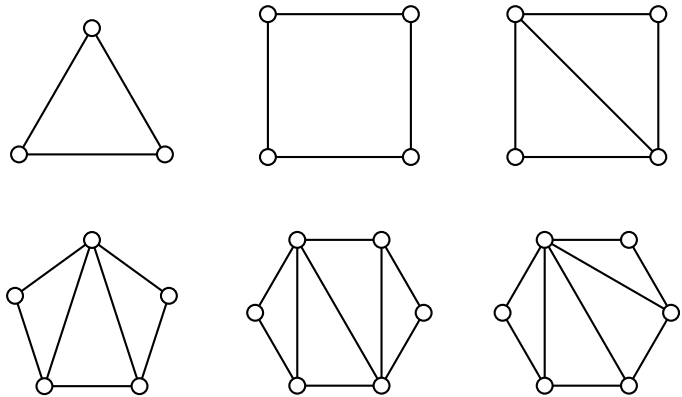

Because the average length of edges in $K_n$ is roughly $n/4$, it's natural to predict that a cgg is packable if it is ``cyclically 4-partite''. Our main result confirms this prediction. The \emph{cyclic chromatic number} $\chi_c(G)$ of a cgg $G$ is the minimum number of intervals that $V(G)$ can be partitioned into such that every edge in $E(G)$ has its endpoints in different parts.
\begin{theorem}\label{main}
Let $G$ be a cgg. If $\chi_c(G)\leq 4$, then $G$ is packable.
\end{theorem}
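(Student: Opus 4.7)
My plan is to apply the Pippenger--Spencer theorem (Theorem~\ref{Pippenger_Spencer}) to a suitable auxiliary hypergraph. Set $k = |V(G)|$ and $m = |E(G)|$, and let $\mathcal{H}_n$ be the $m$-uniform hypergraph on vertex set $E(K_n)$ whose hyperedges are the edge sets of copies of $G$ in $K_n$. A matching in $\mathcal{H}_n$ is precisely a $G$-packing of $K_n$, so the theorem reduces to finding a matching in $\mathcal{H}_n$ covering $(1-o(1))\binom{n}{2}$ vertices.

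First I would verify the standard degree and codegree estimates. For an edge $e$ of $K_n$ of length $\ell$ the degree in $\mathcal{H}_n$ is
\[
d_{\mathcal{H}_n}(e) \;=\; \sum_{f \in E(G)}\binom{\ell-1}{a_f}\binom{n-\ell-1}{b_f},
\]
where $(a_f,b_f)$ with $a_f+b_f=k-2$ are the two arc sizes bounded by the endpoints of $f$ in $G$. This is $\Theta(n^{k-2})$ whenever $\ell = \Theta(n)$, but it varies with $\ell$, so $\mathcal{H}_n$ is not regular. A routine count gives pairwise codegrees $O(n^{k-3}) = o(n^{k-2})$.

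The heart of the proof is dealing with the non-regularity, and this is where $\chi_c(G) \leq 4$ enters. I would partition $V(G) = A_1 \sqcup A_2 \sqcup A_3 \sqcup A_4$ into cyclic intervals with no intra-part edges, and assign each copy $C$ a \emph{shape} $\sigma(C) = (s_1,\dots,s_4)$, $\sum s_i = n$, where $s_i$ is the length of the arc of $[n]$ containing the image of $A_i$. I then introduce non-negative shape weights $w_\sigma$ and build a multi-hypergraph $\widetilde{\mathcal{H}_n}$ by replacing each copy $C$ by $\lfloor M w_{\sigma(C)} \rfloor$ parallel hyperedges (for a large integer $M$). Because there are $\Theta(n^3)$ shapes against only $\Theta(n)$ possible edge lengths, an LP-feasibility argument should allow the $w_\sigma$ to be chosen so that $\widetilde{\mathcal{H}_n}$ is $(1 \pm o(1))D$-regular on the set $E_0$ of edges of length $\ell \in [\varepsilon n,\, n/2 - \varepsilon n]$ for some common $D = \Theta(n^{k-2})$, while codegrees remain $o(D)$.

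Theorem~\ref{Pippenger_Spencer} applied to $\widetilde{\mathcal{H}_n}[E_0]$ then yields a matching covering $(1-o(1))|E_0|$ vertices, which translates to a $G$-packing of $K_n$ covering all but $O(\varepsilon n^2)+o(n^2)$ edges; taking $\varepsilon \to 0$ slowly with $n$ completes the argument. The hardest step is the construction of the shape weights $w_\sigma$: it reduces to showing that the shape-coverage polynomials $\ell \mapsto d_\sigma(\ell)$ admit a non-negative combination approximating a constant on $[\varepsilon n, n/2 - \varepsilon n]$. The bound $\chi_c(G) \leq 4$ is essential here, since for $\chi_c(G) \geq 5$ an obstruction analogous to the $C_5$ example discussed in the introduction prevents such balancing.
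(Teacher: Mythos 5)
Your framework---relax to a fractional/LP problem over ``shapes'' of $G$-copies and then apply R\"odl Nibble to a reweighted hypergraph---is genuinely close in spirit to the paper's method, which also reduces packability to the feasibility of a non-negative linear system (a perfect fractional packing by the weighted representation $W$ of $G$) and then converts a fractional solution into an actual packing via the nibble. But your proposal has a genuine gap at precisely the point you yourself flag as ``the hardest step'': you never prove that the shape weights $w_\sigma$ exist. The observation that there are $\Theta(n^3)$ shapes against only $\Theta(n)$ edge-length constraints does not establish feasibility of a system with non-negativity constraints; the plane $C_5$ sits in exactly the same count regime (many copies, few length classes) and is nevertheless infeasible, as the average-length functional certifies. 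What must be shown is that for $\chi_c(G)\le 4$ there is \emph{no} separating functional at all, not merely that the average-length one vanishes. The paper does this by first using rotational symmetry to compress the LP to one indexed by edge lengths only (so the constraint matrix $M_{K_m,W}$ has a fixed number of rows equal to the number of lengths in a fixed $K_m$), and then running Farkas' lemma: assuming a separating vector $y$ exists, explicit trapezoid-shaped copies $S_i$, $S'_i$, $S''_i$ with prescribed length patterns are exhibited whose constraints force $\sum_i y_i\ge 0$, a contradiction. Some such explicit construction is unavoidable, and your write-up contains none of it; asserting that ``an LP-feasibility argument should allow the $w_\sigma$ to be chosen'' leaves the mathematical core of the theorem unproved.

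Two secondary issues. First, your restriction to the set $E_0$ of edges of length in $[\varepsilon n, n/2-\varepsilon n]$ does not interact cleanly with shapes: a copy whose four arcs all have length $\Theta(n)$ can still contain edges of length $O(1)$ (two image vertices in adjacent arcs can be cyclically consecutive), so the induced hypergraph $\widetilde{\mathcal{H}_n}[E_0]$ discards many copies and the degree computation you sketch no longer applies as stated; the paper avoids this by randomly sparsifying copies according to the fractional solution rather than inducing on a vertex subset. Second, replacing each copy by $\lfloor Mw_{\sigma}\rfloor$ parallel hyperedges multiplies codegrees by $M$ as well as degrees, which is survivable but requires care with the rounding when the $w_\sigma$ range over $\Theta(n^3)$ values; again the standard fix (used in the paper) is independent random inclusion with probability proportional to the weight. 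Neither of these is fatal, but the missing feasibility argument is.
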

Theorem~\ref{main} is best possible since (plane) $C_5$ has cyclic chromatic number $5$ and is not packable.
To prove Theorem~\ref{main}, we prove a sufficient condition for the packability of a cgg (see Lemma~\ref{frac_pack_lemma}) which is satisfied by all cgg $G$ with $\chi_c(G)\le 4$. The sufficient condition is quite general, and we also use it to show that some cggs with higher cyclic chromatic numbers can still be packable if they have many ``long'' edges. Let $\mathcal{P}=\{V_1,\dots,V_k\}$ be an interval partition of $V(G)$ with each edge in $E(G)$ having its endpoints in different parts. Since every $V_i$ is an interval, we can define a cyclic order on $\mathcal{P}$ by setting $V_{i_1}\prec V_{i_2}\prec V_{i_3}$ if $v_{i_1}\prec v_{i_2}\prec v_{i_3}$ for any $v_{i_j}\in V_{i_j}$. For each pair $\{V_i,V_j\}$, we use $l_{\mathcal{P}}(\{V_i,V_j\})$ to denote the length of this pair in $\mathcal{P}$, and $E_G(V_i,V_j)$ to denote the set of edges between this pair.
\begin{theorem}\label{higher}
For every integer $k>2$ there is an absolute constant $C_k$ with the following property. Suppose $\mathcal{P}=\{V_1,\dots, V_{2k}\}$ is an interval partition of a cgg $G$ with each edge having its endpoints in different parts, and $E_l$ is the union of all $E_G(V_i,V_j)$ with $l_{\mathcal{P}}(\{V_i,V_j\})=l$ for $l\in [k]$. If $E_1\neq\emptyset$ and $|E_{k}|\geq C_k\cdot \sum_{l<k}|E_l|$, then $G$ is packable.
\end{theorem}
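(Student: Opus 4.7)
The plan is to derive Theorem~\ref{higher} from the sufficient condition Lemma~\ref{frac_pack_lemma}. Via the averaging argument underlying that lemma, packability of $G$ follows once we exhibit a nonnegative measure on copies of $G$ in $K_n$ whose induced distribution on covered edges approximates the uniform distribution on $E(K_n)$. Exploiting rotational invariance of $K_n$, this reduces the task to matching, up to an $o(1)$ perturbation, the edge-length distribution of $K_n$, which is asymptotically uniform on $\{1,\ldots,\lfloor n/2\rfloor\}$.

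Any copy of $G$ in $K_n$ compatible with $\mathcal{P}=\{V_1,\ldots,V_{2k}\}$ is determined, up to rotation and inner placements, by the sizes $t_1,\ldots,t_{2k}$ of the arcs of $K_n$ into which $V_1,\ldots,V_{2k}$ are embedded. In the asymptotic regime, an $E_k$-edge between $V_i$ and $V_{i+k}$ has $K_n$-length essentially $\min(A_i,n-A_i)$ with $A_i = t_{i+1}+\cdots+t_{i+k-1}$, so by tuning the $t_j$'s we can place each $E_k$-edge at any prescribed length in $(0, n/2]$. Weighting a suitable continuous family of such shapes then produces an aggregated $E_k$-length distribution approximating the uniform target distribution on $(0, n/2]$ to arbitrary precision.

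The edges in $E_{<k}:=\bigcup_{l<k}E_l$ contribute unwanted mass at various auxiliary lengths in each shape, but the hypothesis $|E_k|\ge C_k\sum_{l<k}|E_l|$ bounds this contamination by a factor of $1/C_k$ relative to the $E_k$-mass in any length class. Choosing $C_k$ large in terms of the error tolerance demanded by Lemma~\ref{frac_pack_lemma} makes this perturbation negligible, and the lemma then yields the desired near-perfect $G$-packing of $K_n$.

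The main obstacle is the geometric construction above: verifying that the family of achievable $E_k$-length measures, parameterized by arc-size vectors $(t_1,\ldots,t_{2k})$, is rich enough to span, via convex combinations, the uniform target measure on $(0, n/2]$. The small-length regime is the most delicate, since attaining short diametrical lengths requires compressing many arcs simultaneously and risks inflating the $E_{<k}$ perturbation. The hypothesis $E_1\neq\emptyset$ enters precisely here, supplying at least one adjacent-part edge whose presence anchors the shape family in its most degenerate regime and helps inhabit the small-length buckets directly. The remaining codegree conditions required by Lemma~\ref{frac_pack_lemma} should be routine, since generic shapes and rotations yield copies of $G$ sharing only $O(1)$ edges.
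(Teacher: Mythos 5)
Your high-level skeleton matches the paper's: reduce to Lemma~\ref{frac_pack_lemma}, exploit rotational symmetry so that only the distribution of edge-lengths matters, and use the hypothesis $|E_k|\ge C_k\sum_{l<k}|E_l|$ to control the ``contamination'' from short edges. But the step you defer as ``the main obstacle'' --- exhibiting a weighting of copies of $G$ whose induced edge-length measure matches the target --- is in fact the entire mathematical content of the theorem, and your proposal does not supply it. The paper does not construct such a primal solution at all. Instead it symmetrizes the weighted representation of $G$ into a rotation-invariant weighted $K_{2k}$ (call it $W'$), reduces via Observation~\ref{compressed_matrix} to the feasibility of a finite linear program $M_{K_m,W'}x=\vec 1$, $x\ge 0$ over a \emph{fixed} auxiliary $K_m$ (with $m$ depending only on $k$ and the weights, not on $n$), and proves feasibility by Farkas' lemma: any putative dual certificate $y$ is refuted using two explicit families of configurations (the $S_i$, which place all $k$ diametrical $w_k$-edges at a common length $i$ and force the negative mass of $y$ to concentrate on lengths below $k$; and the $S_{ij}$, which use the $w_1$-edges to hit a short negative length $m'/(2k)$ times). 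This is where $w_1>0$, the constant $C_k=16k$, and the choice of $m$ actually enter. Your proposal replaces this with an unverified claim that ``a suitable continuous family of shapes'' spans the uniform measure; note also that the $k$ diametrical lengths are coupled (the arc sizes sum to $n$), so they cannot be prescribed independently as your sketch suggests.

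There is a second, quantitative gap. Lemma~\ref{frac_pack_lemma} requires a \emph{perfect} fractional packing --- equation \eqref{eq:perfect_fractional_packing_def} is an exact equality --- because the R\"odl nibble needs the auxiliary hypergraph to be almost regular; edges whose total fractional coverage deviates by a constant either get over-covered (impossible for a matching) or are systematically missed. Your error budget is a fixed fraction $1/C_k$ of the mass in each length class, with $C_k$ an absolute constant independent of $n$. That yields at best a packing covering a $(1-\Omega(1/C_k))$-fraction of edges, not all but $o(n^2)$, so packability does not follow. You cannot let $C_k$ grow with the error tolerance, since the theorem fixes $C_k$ once and for all. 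The exact balancing of the short-edge contribution against the long-edge contribution is precisely what the LP feasibility argument accomplishes and what a perturbative argument cannot.
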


We also prove an analogue of Theorem~\ref{main} for ordered graphs. An ordered graph is a graph $G$ with a linear order $<$ on $V(G)$. An isomorphism between ordered graphs is a graph isomorphism preserving the linear order. Packings, intervals, and lengths are defined similarly for ordered graphs. We say an ordered graph $G$ is \emph{linearly packable} (shortly \emph{packable}) if there exist $G$-packings of the complete ordered graph $K_n$ that covers all but $o(n^2)$ edges. The \emph{interval chromatic number} $\chi_<(G)$ of an ordered graph $G$ is the minimum number of intervals that $V(G)$ can be partitioned into such that every edge in $E(G)$ has its endpoints in different parts.
\begin{theorem}\label{ordered}
Let $G$ be an ordered graph. If $\chi_<(G)\leq 3$, then $G$ is packable.
\end{theorem}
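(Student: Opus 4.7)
The plan is to follow the blueprint used for Theorem~\ref{main}, replacing cyclic symmetry with its linear counterpart. We first formulate an ordered-graph analogue of Lemma~\ref{frac_pack_lemma}: an ordered graph $G$ is packable whenever, for every $\epsilon>0$ and $n$ sufficiently large, there exist nonnegative weights $w(H)$ on the ordered-copies $H$ of $G$ in $K_n$ satisfying $\sum_{H\ni e}w(H)\le 1$ for every edge $e$ of $K_n$, $\sum_H w(H)\ge (1-\epsilon)\binom{n}{2}/|E(G)|$, and a regularity bound of the form $\max_H w(H)=o(\sum_H w(H))$. Given such a ``near-fractional'' $G$-packing, a Pippenger--Spencer-type nibble (Theorem~\ref{Pippenger_Spencer}) rounds it to an integer ordered-packing of size $(1-o(1))\binom{n}{2}/|E(G)|$, which gives packability.

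Next, I would construct the fractional packing for $G$ with $\chi_<(G)\le 3$. Write $V(G)=V_1\cup V_2\cup V_3$ with $|V_i|=a_i$ and $a_1+a_2+a_3=h$. For each triple $(b_1,b_2,b_3)$ of positive integers with $b_1+b_2+b_3=n$ and $b_i\ge a_i$, let $\mu_{b_1,b_2,b_3}$ be the distribution on ordered-copies obtained by placing $V_i$ uniformly at random inside the $i$-th consecutive subinterval of $[n]$ of length $b_i$. I would take $w$ proportional to a convex combination $\sum_{(b_1,b_2,b_3)}\lambda(b_1,b_2,b_3)\,\mu_{b_1,b_2,b_3}$, with the mixing measure $\lambda$ chosen so that every edge of $K_n$ is covered uniformly. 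A direct computation shows that, in the limit $n\to\infty$ with $p/n\to x$, $q/n\to y$, and $b_i/n\to\beta_i$, the expected coverage of the edge $\{p,q\}$ under $\mu_{b_1,b_2,b_3}$ is (up to scaling by $n^{-2}$) a polynomial in $(x,y,\beta_1,\beta_2,\beta_3)$ obtained by summing, over edges $\{u,v\}\in E(G)$, contributions determined by the ranks of $u,v$ in their respective $V_i$'s.

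The main obstacle is the balancing step: choosing $\lambda$ so that the integrated expected coverage is constant in $(x,y)$. With three parts, there are three edge types between $V_i$ and $V_j$; the length ranges they realize in $K_n$ overlap, and by adjusting $(\beta_1,\beta_2,\beta_3)$ these ranges shift. In particular, edges between $V_1$ and $V_3$ can be tuned to have any prescribed fractional length of $K_n$ via $\beta_2$, while the $V_1$-$V_2$ and $V_2$-$V_3$ types cover complementary regimes; together this should give enough flexibility to make the covering function constant. This parallels the four-part balancing used in Theorem~\ref{main}: informally, the linear order ``absorbs'' one of the four cyclic intervals into the boundary, reducing the problem to three visible parts. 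Finally, the regularity bound on $\max_H w(H)$ is automatic, since each $\mu_{b_1,b_2,b_3}$ is supported on $\Theta(n^h)$ ordered-copies, so no single copy can carry more than an $O(n^{-h})$ fraction of the total weight.
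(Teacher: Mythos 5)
Your outline (fractional packing plus a nibble rounding) is reasonable, but the proof has a genuine gap exactly where you flag ``the main obstacle'': the balancing step is the entire content of the theorem and it is not carried out, and there is a concrete structural reason to doubt it can be done within your family of distributions. In $\mu_{b_1,b_2,b_3}$ every copy of $G$ has $V_1$ spread over a prefix $[1,b_1]$ and $V_3$ over a suffix of $[n]$. Hence, for $b$ with all $b_i=\Theta(n)$, every edge of a random copy typically has length $\Theta(n)$, and the probability that the copy covers some edge of length at most $\epsilon n$ lying in the middle third of $[n]$ is only $O(\epsilon^2)$ (two endpoints of a $G$-edge must both land within $\epsilon n$ of a cut point $b_1$ or $b_1+b_2$). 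Since the total weight of copies is forced to be $\Theta(n^2)$ while there are $\Theta(\epsilon n^2)$ such short middle edges each needing coverage $1-o(1)$, the non-degenerate part of the mixture undershoots by a factor of $\epsilon$; one can check that any feasible $\lambda$ must therefore put a constant fraction of its mass on configurations with $b_2\le \delta n$, for every fixed $\delta>0$. You would then have to establish uniform coverage using these near-degenerate configurations, and there your last claim also fails: when $b_2=O(1)$, the measure $\mu_b$ is supported on only $\Theta(n^{h-a_2})$ copies, so $\max_H w(H)$ is not $O(n^{-h})$ of the total; worse, pairs of $K_n$-edges meeting the squeezed $V_2$-interval can have codegree comparable to the degree, and Theorem~\ref{Pippenger_Spencer} requires a codegree bound, not merely a bound on individual weights. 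The ``flexibility'' you invoke is precisely what has to be proved, and nothing in the proposal proves it.

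The paper avoids all of this with two ingredients absent from your argument. First, rather than mixing over all splits of $[n]$, it fixes a single tripartite host $\tilde{K}_3[n']$ whose parts have sizes proportional to $e_{12}e_{13}:e_{12}e_{23}:e_{13}e_{23}$; with these sizes the number of host edges between parts $i$ and $j$ is proportional to $e_{ij}$, so the uniform distribution on copies already covers every host edge at the same rate and no balancing measure $\lambda$ is needed. Second, the edges internal to the three parts --- which include all the problematic short edges --- are handled by \emph{recursion}: each part is repartitioned in the same proportions and packed again, so short edges are covered by copies living deep in the recursion tree, inside short intervals positioned throughout $[n]$. Your single-level mixture has no mechanism playing the role of this recursion. (Separately, the case $\chi_<(G)=2$ needs its own treatment --- the paper uses the Pach--Tardos ordered Tur\'an theorem --- since a three-part construction degenerates there.)
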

Theorem~\ref{ordered} is also best possible: the ordered graph $P_3$, whose vertices are $1<2<3<4$ and edges are $\{1,2\},\{2,3\},\{3,4\}$, is not packable. We give a heuristic explanation of this fact. The average length of all edges in the complete ordered graph $K_n$ is $(1+o(1))n/3$. Although the average length of the edges in a copy of $P_3$ in $K_n$ can attain $(1-o(1))n/3$, it only happens when the head and tail of this copy are very close to the head and tail of $V(K_n)$ with respect to $<$. But there are not enough such $P_3$-copies to cover most edges of $K_n$.

To end this introduction, we remark that research have been done for packing problems of (not necessarily convex) geometric graphs~\cite{AHKVLPSW2017,BK1979,biniaz2020packing,BHRW2006,obenaus2021edge,TCAK2019}. The extremal problems about cgg's and ordered graphs are also extensively studied, with many results particularly related to cyclic chromatic numbers and/or interval chromatic numbers~\cite{axenovich2018chromatic,BKV2003,furedi2020ordered,gyHori2018turan,pach2006forbidden}.

The rest of this paper is organized as follows. In Section~\ref{sec:frac_pack}, we prove Lemma~\ref{frac_pack_lemma} which translates the packing problems of cgg's into fractional packing problems of weighted cgg's. It will be useful to view the fractional packing problems as linear programming problems. In Section~\ref{sec:main} and Section~\ref{sec:higher}, we establish Theorem~\ref{main} and Theorem~\ref{higher}, by proving the feasibility of their corresponding linear programs through Farkas' lemma. In Section~\ref{sec:ordered}, we prove Theorem~\ref{ordered} directly using R\"odl Nibble. Section~\ref{sec:remark} lists some final remarks. We systemically omit floors and ceilings whenever they are not crucial for the sake of clarity in our presentation.

\section{Fractional packing of weighted convex geometric graphs}\label{sec:frac_pack}
In this section, we prove an auxiliary lemma which relates the packability of a given cgg to a fractional packing problem of an associated weighted cgg. We introduce some notions before we state and prove this lemma.

A \emph{weighted cgg} is a cgg $G$ together with a function $\omega_G: E(G)\to{\mathbb{R}_{\geq 0}}$. A weighted sub-cgg $G$ of $H$ is a weighted cgg whose underlying cgg structure is a sub-cgg of $H$. An unweighted cgg $H$ can be seen as a weighted cgg with each edge having weight $1$. An isomorphism $f:G\to H$ between two weighted cgg's is a cgg-isomorphism such that $\omega_G(e)=\omega_H(f(e))$ for all $e\in E(G)$. We use $\mathcal{I}(H,G)$ to denote the set of all weighted sub-cgg's of $H$ that are isomorphic to $G$. A \emph{perfect fractional $G$-packing} of $H$ is a function $\phi:\mathcal{I}(H,G)\to \mathbb{R}_{\geq 0}$ such that
\begin{align}\label{eq:perfect_fractional_packing_def}
    \sum_{S\in \mathcal{I}(H,G)}\phi(S)\omega_{S}(e)=\omega_H(e)\text{ for all $e\in E(H)$}.
\end{align} Here, we take the convention that $\omega_{S}(e)=0$ if $e\not\in E(S)$.

For an unweighted cgg $G$ and an interval partition $\mathcal{P}=\{V_1,\dots, V_k\}$ of $V(G)$ with each edge having its endpoints in different parts, we described in Section~\ref{sec:intro} that $\mathcal{P}$ has an induced cyclic order. A \emph{weighted representation} of $G$ (constructed from $\mathcal{P}$) is a complete weighted cgg $W$ with vertex set $\mathcal{P}$ and edge weight given by
\[ \omega_W(\{V_i,V_j\})=|E_G(V_i,V_j)|\text{  for every $i\neq j\in[k]$}.\]
See Figure~\ref{fig:weighted_and_blowup} for an example of weighted representations.

Now we state the main result of this section.
\begin{lemma}\label{frac_pack_lemma}
Let $G$ and $H$ be two cgg's. Suppose $\chi_c(G)\geq 3$ and $W$ is a weighted representation of $G$. If $H$ is packable and has a perfect fractional $W$-packing, then $G$ is packable.
\end{lemma}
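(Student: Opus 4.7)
The plan is a two-scale construction: use packability of $H$ at a coarse scale, then apply the R\"odl nibble at a fine scale. Fix $\varepsilon>0$ and choose $s=s(n)$ with $s\to\infty$ and $s=o(n)$. Partition $V(K_n)$ into $n/s$ consecutive intervals $A_1,\dots,A_{n/s}$ of size $s$, viewed as super-vertices of a complete cgg $K^*$ on $n/s$ vertices. Since $H$ is packable and $n/s\to\infty$, pack edge-disjoint copies of $H$ into $K^*$ covering all but an $\varepsilon$-fraction of super-edges. Each packed copy $H'$ selects $|V(H)|$ super-vertices in cyclic order, and its super-edges lift, inside $K_n$, to $K_{s,s}$'s between the chosen super-intervals. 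Their union is a blow-up $H'[s]$ with $|E(H)|\cdot s^2$ actual edges, and the task reduces to $G$-packing each $H'[s]$ almost perfectly.

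Inside each $H'[s]$, the perfect fractional packing $\phi:\mathcal{I}(H,W)\to\mathbb{R}_{\geq 0}$ tells us where to find a $G$-packing. A \emph{concrete $G$-copy of type $S$} (for $S\in\mathcal{I}(H',W)$, identified with $\mathcal{I}(H,W)$ via the isomorphism $H\cong H'$) is the copy of $G$ obtained by: (i) using $S$ to assign the $k$ parts $V_1,\dots,V_k$ of $\mathcal{P}$ to $k$ super-vertices of $H'$; (ii) choosing $|V_a|$ positions in cyclic order inside each assigned super-interval. Set up the auxiliary $|E(G)|$-uniform hypergraph $\mathcal{A}$ on vertex set $E(H'[s])$ whose hyperedges are all concrete $G$-copies, each weighted by $\phi(S)$ (rescale to integer multiplicities if needed). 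A double-counting argument inside super-intervals shows that the weighted degree of an edge $e^*\in E(H'[s])$ joining $A_{i_a}$ and $A_{i_b}$ equals
\[
(1+o(1))\cdot C\cdot s^{|V(G)|-2}\cdot\sum_{S\,:\,\{A_{i_a},A_{i_b}\}\in E(S)}\phi(S)\,\omega_S(\{A_{i_a},A_{i_b}\}),
\]
for a constant $C$ depending only on $G$. The fractional-packing identity $\sum_{S\ni e}\phi(S)\omega_S(e)=1$ makes this quantity independent of $e^*$, so $\mathcal{A}$ is $(1+o(1))$-regular. Codegrees of pairs of edges in $\mathcal{A}$ are $O(s^{|V(G)|-3})$, giving a codegree-to-degree ratio $O(1/s)=o(1)$. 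The R\"odl nibble (Theorem~\ref{Pippenger_Spencer}) then produces edge-disjoint concrete $G$-copies covering all but $o(s^2)$ edges of $H'[s]$.

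Summing losses: intra-interval edges of $K_n$ total $O(ns)=o(n^2)$; super-edges missed by the $H$-packing of $K^*$ contribute at most $\varepsilon\cdot(n/s)^2\cdot s^2\leq\varepsilon n^2$ actual edges; and the within-$H'[s]$ nibble losses aggregate to $o((n/s)^2)\cdot s^2=o(n^2)$. Letting $\varepsilon\to 0$ slowly with $n$ then proves that $G$ is packable. The main technical obstacle is verifying the $(1+o(1))$-regularity of $\mathcal{A}$ uniformly in the position of $u\in A_{i_a}$ within its super-interval: the count of order-preserving embeddings of a part $V_a$ into $A_{i_a}$ with one vertex pinned at $u$ depends on $u$'s position, but boundary effects (where $u$ lies within $|V_a|$ of an end of $A_{i_a}$) perturb the count only on an $O(1/s)$-fraction of edges and can be absorbed into the nibble loss. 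The hypothesis $\chi_c(G)\geq 3$ forces $k\geq 3$, keeping the cyclic structure of $W$-copies (and the type-$S$ enumeration above) nontrivial.
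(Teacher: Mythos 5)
Your overall two-scale scheme is the same as the paper's (the paper phrases it as blowing $K_n$ up to $K_n[n]\subset K_{n^2}$ rather than contracting $K_n$ into $n/s$ super-intervals, but these are equivalent), and the role of the fractional $W$-packing in regularizing the auxiliary hypergraph is also the intended one. However, there is a genuine gap at the step you yourself flag as "the main technical obstacle": the hypergraph $\mathcal{A}$ of \emph{all} concrete $G$-copies is not $(1+o(1))$-regular, and the non-uniformity is not a boundary effect affecting only an $O(1/s)$-fraction of edges. If $u$ is the $j$-th vertex of its super-interval and the edge of $G$ mapped onto $e^*$ uses the $r$-th vertex of a part $V_a$ with $|V_a|=t\ge 2$, the number of order-preserving ways to place the remaining vertices of $V_a$ is $\binom{j-1}{r-1}\binom{s-j}{t-r}$, which varies by constant factors (indeed from $\Theta(s^{t-1})$ down to $0$) as $j$ ranges over all of $[s]$. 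For instance, if the only edge of $G$ between $V_a$ and $V_b$ leaves from the middle vertex of a $3$-element part $V_a$, the degree of $e^*$ carries a factor $(j-1)(s-j)$, so a constant fraction of the $s^2$ edges in each super-edge have degree off by a constant factor. The identity $\sum_{S}\phi(S)\omega_S(e)=1$ equalizes degrees \emph{across} different super-edges but cannot compensate for this positional variation \emph{within} a super-edge, since $\phi(S)$ does not depend on positions. Condition 1 of Theorem~\ref{Pippenger_Spencer} therefore fails for a constant fraction of vertices of $\mathcal{A}$, and the nibble cannot be applied as stated.

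The paper's proof closes exactly this hole by not admitting all concrete copies: it only takes copies in which, for each part, the chosen vertices inside a super-interval form an arithmetic progression of common difference $\delta\in[1,\log n]$, and it includes each such copy independently with probability $\phi(W')/\Phi$. With the spacing restriction, once the image of one $G$-edge and $\delta$ are fixed, the positions of all other vertices in the two incident parts are determined, so every non-boundary edge has expected degree exactly $(1-o(1))n^{k-2}\log n/\Phi$, uniformly; the randomization also keeps the hypergraph simple (your parenthetical "rescale to integer multiplicities" would create a multihypergraph and threaten the codegree condition, though with a rational $\phi$ this is a lesser issue). To repair your argument you would need either this equal-spacing device or some other regularization of the family of admitted $G$-copies; as written, the regularity claim on which the nibble application rests is false whenever some part of $\mathcal{P}$ has two or more vertices.
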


To prove this lemma, we need the following theorem, known as R\"odl Nibble~\cite{Rodl1985}. The version stated here is due to Pippenger and Spencer~\cite{PS1989}, see also~\cite[Section~4.7]{AS2004}.
\begin{theorem}[R\"odl Nibble]\label{Pippenger_Spencer}
For every integer $r\ge 2$ and real numbers $c\ge1$ and $\epsilon>0$, there
exist $\gamma=\gamma(r,c,\epsilon)$, $n_0=n_0(r,c,\epsilon)$ and
$d_0=d_0(r,c,\epsilon)$ such that for all integers $n\ge n_0$ and $D\ge d_0$, the following holds. If $\mathcal{H}=(\mathcal{V},\mathcal{E})$ is an $n$-vertex $r$-uniform hypergraph satisfying
\begin{enumerate}
    \item For all $x\in V$ but at most $\gamma n$ of them, its degree $d(x)=(1\pm\gamma)D$.
    \item For all $x\in V$, its degree $d(x)<c D$.
    \item For any two distinct $x_1,x_2\in V$, their codegree $d(x_1,x_2)<\gamma D$.
\end{enumerate}
Then $\mathcal{H}$ contains a matching of at least $(1-\epsilon)n/r$ hyperedges.
\end{theorem}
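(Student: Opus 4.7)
The plan is to use R\"odl's semi-random \emph{nibble} method: iteratively carve out a small matching from the current hypergraph that covers only a tiny fraction of the remaining vertices, then restrict to the surviving vertices and repeat. After $O(\log(1/\epsilon))$ rounds, at most $\epsilon n$ vertices remain uncovered, and the union of the partial matchings (automatically vertex-disjoint across rounds, since each later matching uses only vertices untouched by earlier ones) is a matching in $\mathcal{H}$ of size at least $(1-\epsilon)n/r$.

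A single nibble proceeds as follows. Fix a small parameter $\epsilon_0>0$ depending on $r,c,\epsilon$. In the current hypergraph $\mathcal{H}'$ with parameters $(n',D',\gamma')$, include each hyperedge independently with probability $p=\epsilon_0/D'$, obtaining a random edge set $E^*$. Let $M\subseteq E^*$ consist of those edges that meet no other edge of $E^*$; then $M$ is a matching. A first-moment calculation, using that the codegree bound forces almost all of the $\approx rD'$ edges intersecting a given edge $e$ to share exactly one vertex with $e$, gives $\Pr[e\in M]\approx p\,e^{-\epsilon_0 r}$, hence an expected coverage fraction of $\epsilon_0 e^{-\epsilon_0 r}$ per vertex. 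An analogous computation shows that the expected degree of a surviving vertex in the restricted hypergraph shrinks by the factor $(1-\epsilon_0 e^{-\epsilon_0 r})^{r-1}$, while the codegree shrinks by at least as much relative to the new degree.

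To upgrade these expectations into almost-sure statements I would apply Azuma's inequality (or Talagrand's concentration bound) to the martingale obtained by exposing the independent Bernoulli variables one at a time. The codegree hypothesis (condition~3) is the crucial input here: it bounds the per-exposure Lipschitz constant of the relevant degree random variables by $O(\gamma' D')$, so that a standard martingale tail bound yields concentration with deviation $o(D')$ for all but $o(n')$ ``bad'' vertices, which we simply discard. This way, the hypergraph on surviving non-bad vertices again satisfies the three hypotheses with parameters $(n'',D'',\gamma'')$ where $D''\approx (1-\epsilon_0 e^{-\epsilon_0 r})^{r-1}D'$ and $\gamma''$ is only slightly larger than $\gamma'$; the upper-bound condition on the max degree propagates directly.

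The main obstacle is controlling the compounding of errors across iterations. Over $T=\Theta(\log(1/\epsilon))$ rounds, the slack parameter $\gamma$ must remain below the threshold needed for the next Azuma estimate to bite; this forces careful choice of $\epsilon_0$ and of the per-round slack budget, and it is precisely here that the lower bound $d_0$ on $D$ in the theorem statement is used, so that the absolute deviations $o(D_t)$ stay negligible even as $D_t$ shrinks geometrically. Once the invariants are maintained for enough rounds that $(1-\epsilon_0 e^{-\epsilon_0 r})^T\le\epsilon$, the accumulated matching covers at least $(1-\epsilon)n$ vertices, giving the required lower bound of $(1-\epsilon)n/r$ on the matching size.
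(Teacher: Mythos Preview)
The paper does not prove this theorem at all: it is quoted verbatim as a known result of Pippenger and Spencer (with a reference to \cite{PS1989} and to \cite[Section~4.7]{AS2004}) and is used as a black box in the proof of Lemma~\ref{frac_pack_lemma}. So there is no ``paper's own proof'' to compare against.

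That said, your sketch is a faithful outline of the standard semi-random nibble argument that underlies the cited references: independent selection with probability $p=\epsilon_0/D'$, retention of isolated selected edges as the partial matching, expected degree shrinkage by a fixed factor, concentration (via Azuma/Talagrand, using the codegree bound to control Lipschitz constants), and iteration for $O(\log(1/\epsilon))$ rounds with careful bookkeeping of the slack $\gamma$. This is exactly the approach in the Pippenger--Spencer paper and in the Alon--Spencer exposition the authors cite. The one point to be careful about if you were to flesh it out is the accumulation of the ``bad'' vertices discarded at each round and the growth of $\gamma_t$; both must stay $o(1)$ uniformly over the $\Theta(\log(1/\epsilon))$ rounds, which is why the initial $\gamma$ must be chosen small as a function of $r,c,\epsilon$ and not merely small in absolute terms.
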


Another ingredient in our proof is the notion of blowups of cgg's. Given a cgg $H$ and a positive integer $n$, the \emph{$n$-regular blowup} $H[n]$ of $H$ is a cgg constructed as follows. The vertex set of $H[n]$ consists of $|V(H)|$ intervals, denoted as $\{I_v\}_{v\in V(H)}$, each of cardinality $n$, and $I_{v_1}\prec I_{v_2}\prec I_{v_3}$ in $H[n]$ whenever $v_1\prec v_2\prec v_3$ in $H$; The edge set of $H[n]$ consists of all pairs between $I_{u}$ and $I_{v}$ whenever $\{u,v\}\in E(H)$. See Figure~\ref{fig:weighted_and_blowup} for an example of blowups.

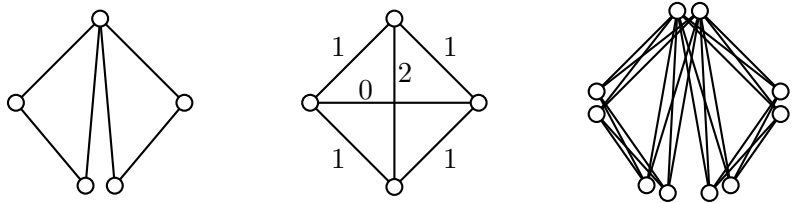
\begin{figure}[ht]
    \centering
    \begin{tikzpicture}[thick, scale=.7]
    \tikzstyle{uStyle}=[shape = circle, minimum size = 6.0pt, inner sep = 0pt,
    outer sep = 0pt, draw, fill=white]
    \tikzstyle{lStyle}=[shape = rectangle, minimum size = 20.0pt, inner sep = 0pt,
outer sep = 2pt, draw=none, fill=none]
    \tikzset{every node/.style=uStyle}
    
    \begin{scope}[xshift=-2.2in]
        
    \foreach \i in {1,2,3}
    \draw (90*\i-90:1.6cm) node(v\i) {};
    \draw (260:1.6cm) node(v4){};
    \draw (280:1.6cm) node(v5){};
    \foreach \i/\j in
    {1/2,2/3,3/4,4/2,2/5,1/5}
    \draw (v\i) edge (v\j);
    
    \end{scope}
    
    \begin{scope}[xshift=0in]
    \foreach \i in {1,2,3,4}
    \draw (90*\i:1.6cm) node (v\i) {};
    
    \foreach \i/\j in
    {1/2,1/3,1/4,2/3,2/4,3/4}
    \draw (v\i) edge (v\j);
    
    \foreach \i in{1,2,3,4}
    \draw (45+90*\i:1.5cm) node[lStyle]{$1$};
    \draw (70:0.6cm) node[lStyle]{$2$};
    \draw (155:0.6cm) node[lStyle]{$0$};
    \end{scope}
    
    \begin{scope}[xshift=2.2in,scale=1.1]
         
    \foreach \i in {1,2,3}
    \draw (90*\i-97:1.6cm) node(v\i1) {};
    \foreach \i in {1,2,3}
    \draw (90*\i-83:1.6cm) node(v\i2) {};
    \draw (243:1.6cm) node(v41){};
    \draw (283:1.6cm) node(v51){};
    \draw (257:1.6cm) node(v42){};
    \draw (297:1.6cm) node(v52){};
    \foreach \i/\j in
    {1/2,2/3,3/4,4/2,2/5,1/5}
    \draw (v\i1) edge (v\j1);
    \foreach \i/\j in
    {1/2,2/3,3/4,4/2,2/5,1/5}
    \draw (v\i1) edge (v\j2);
    \foreach \i/\j in
    {1/2,2/3,3/4,4/2,2/5,1/5}
    \draw (v\i2) edge (v\j1);
    \foreach \i/\j in
    {1/2,2/3,3/4,4/2,2/5,1/5}
    \draw (v\i2) edge (v\j2);
    \end{scope}
    
    \end{tikzpicture}
    \caption{A cgg $H$; A weighted representation of $H$; The blowup $H[2]$.}
    \label{fig:weighted_and_blowup}
\end{figure}

\begin{proof}[Proof of Lemma~\ref{frac_pack_lemma}]
We claim that there exist $G$-packings of the blowup $H[n]$ covering all but $o(n^2)$ edges. We first prove $G$ is packable given this claim. Take an arbitrary positive integer $n$. Since $H$ is packable, there exists an $H$-packing $\mathcal{C}$ of $K_n$ that covers all but $o(n^2)$ edges. Observe that $\mathcal{C}$ induces an $H[n]$-packing $\mathcal{C}'$ of the blowup $K_n[n]$ as follows: if $H'\in \mathcal{C}$ is a sub-cgg of $K_n$, then $H'[n]$ is naturally a sub-cgg of $K_n[n]$ and we include it in $\mathcal{C}'$. For each copy of $H[n]$ in $\mathcal{C}'$, according to the claim, we can use a $G$-packing to cover all but $o(n^2)$ of its edges, and consequently obtain a $G$-packing $\mathcal{C}''$ of $K_n[n]$. Since $K_n[n]$ is a sub-cgg of $K_{n^2}$, we view $\mathcal{C}''$ as a $G$-packing of $K_{n^2}$, whose uncovered edges are in either one of the following categories:\begin{itemize}
    \item Edges that are in $K_{n^2}$ but not in $K_n[n]$. There are $O(n^3)$ many of them;
    \item Edges of $K_n[n]$ that are not covered by $\mathcal{C}'$. There are at most $o(n^2)\cdot n^2=o(n^4)$ of them (the $o(n^2)$ term corresponds to edges of $K_n$ not covered by $\mathcal{C}$);
    \item Edges of copies of $H[n]$ in $\mathcal{C}'$ that are not covered by $\mathcal{C}''$. There are at most $|\mathcal{C}'|\cdot o(n^2)=o(n^4)$ of them (note that $|\mathcal{C}'|<|E(K_n)|=O(n^2)$).
\end{itemize} Therefore, the $G$-packing $\mathcal{C}''$ covers all but $o(n^4)$ edges of $K_{n^2}$. So $G$ is packable by the arbitrariness of $n$.

For the rest of this proof, we show the existence of $G$-packings of $H[n]$ covering all but $o(n^2)$ edges. Let $\mathcal{P}=\{V_1,\dots,V_k\}$ be the interval partition from which $W$ is constructed. Note that $k\geq\chi_c(G)\geq 3$. Denote each interval in the blowup $H[n]$ corresponding to a vertex $v\in V(H)$ as $I_v$. By assumption, there's a function $\phi:\mathcal{I}(H,W)\to \mathbb{R}_{\geq 0}$ satisfying \eqref{eq:perfect_fractional_packing_def} for $W$. We write
\[\Phi:=\max_{S\in \mathcal{I}(H,W)}\phi(S)\quad\text{and}\quad m:=\max\{|V_1|,\dots,|V_k|\}.\]

We construct a random hypergraph $\mathcal{H}$ whose vertex set $\mathcal{V}$ consists of all edges of $H[n]$ and edge set $\mathcal{E}$ consists of copies of $G$ coming from the following random experiment: For each sub-cgg $G'\subset H[n]$ satisfying\begin{itemize}
    \item there exists an isomorphism $f:G\to G'$ such that there are vertices $v_1,\dots,v_k\in V(H)$ with $f(V_i)\subset I_{v_i}$, and
    \item there exists $\delta\in [1,\log n]$ such that for all $i$, every consecutive pair of vertices in $V(G')\cap I_{v_i}$ has length $\delta$ in $H[n]$,
\end{itemize} its weighted representation $W'$ is naturally a weighted sub-cgg of $H$ (on vertices $v_1,\dots,v_k$). See Figure~\ref{fig:frac_pack_correspondence}. We include $E(G')$ into $\mathcal{E}$ as a hyperedge with probability $\phi(W')/\Phi$.

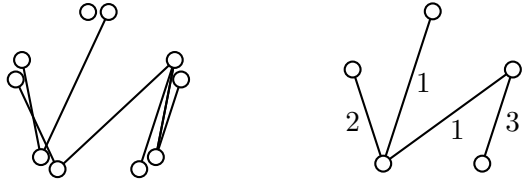
\begin{figure}[ht]
    \centering
    \begin{tikzpicture}[thick, scale=.7]
    \tikzstyle{uStyle}=[shape = circle, minimum size = 6.0pt, inner sep = 0pt,
    outer sep = 0pt, draw, fill=white]
    \tikzstyle{lStyle}=[shape = rectangle, minimum size = 20.0pt, inner sep = 0pt,
outer sep = 2pt, draw=none, fill=none]
    \tikzset{every node/.style=uStyle}
    
    \begin{scope}[xshift=-2.5in]
        
    \foreach \i in {1,2,3,4,5}
    \draw (72*\i+25:1.6cm) node(v\i1) {};
    \foreach \i in {1,2,3,4,5}
    \draw (72*\i+11:1.6cm) node(v\i2) {};

    \foreach \i/\j in
    {12/32, 32/22, 21/31, 31/51, 51/41, 51/41, 51/42, 41/52}
    \draw (v\i) edge (v\j);
    
    \end{scope}
    
    \begin{scope}
    \foreach \i in {1,2,3,4,5}
    \draw (72*\i+18:1.6cm) node (v\i) {};
    
    \foreach \i/\j in
    {1/3,2/3,3/5,5/4}
    \draw (v\i) edge (v\j);
    
    \draw (126:0.3cm) node[lStyle]{$1$};
    \draw (306:0.8cm) node[lStyle]{$1$};
    \draw (342:1.6cm) node[lStyle]{$3$};
    \draw (198:1.6cm) node[lStyle]{$2$};

    \end{scope}
    
    \end{tikzpicture}
    \caption{A sub-cgg $G'\subset H[2]$; Its weighted representation $W'\subset H$. (Edges of $H$ and $H[2]$ are not pictured here but in Figure~\ref{fig:weighted_and_blowup}.)}
    \label{fig:frac_pack_correspondence}
\end{figure}

Now we check that $\mathcal{H}$ satisfies the conditions of R\"odl Nibble with high probability.

Take any vertex $x\in \mathcal{V}$ and denote its degree in $\mathcal{H}$ as $D_x$. Write $x=\{u_1,u_2\}$ as an edge in $H[n]$. Consider $v_i\in V(H)$ such that $u_i\in I_{v_i}$ for $i=1,2$ and let $e:=\{v_1,v_2\}\in E(H)$. We say $x$ is \emph{good} if $u_1$ and $u_2$ are at least $m\log n$ vertices away from both boundary vertices of the interval $I_{v_i}$. If $x$ is good, then we can compute the expectation
\[\mathbb{E}(D_x)=(1-o(1))\frac{n^{k-2}\log n}{\Phi}.\]
Indeed, for each $W'\in \mathcal{I}(H,W)$ containing $e$, we count the number of $G'$ in $\mathcal{H}$ whose weighted representation is $W'$ and covering the edge $x\in E(H[n])$. There are $\log n$ choices for $\delta$, $\omega_{W'}(e)$ choices for $f^{-1}(x)$, and $(1-o(1))n^{k-2}$ choices for vertices in $V(G')\cap I_v$ with $v\in V(W')\setminus e$. Hence the computation follows from linearity of expectations, $H$ being unweighted, and \eqref{eq:perfect_fractional_packing_def}.

If $x$ is not good, there are fewer choices in the above process, so $\mathbb{E}(D_x)\leq (1-o(1))n^{k-2}\log n/\Phi$. Moreover, there are at most $\binom{|V(H)|}{2}(2m\log n)2n=O(n\log n)$ edges of $H[n]$ that are not good.

Take any two distinct vertices $x_1,x_2\in \mathcal{V}$ and denote their codegree in $\mathcal{H}$ by $D_{x_1x_2}$. Write $x_i=\{u_{i1},u_{i2}\}\in E(H[n])$ and let $v_{ij}\in V(H)$ such that $u_{ij}\in I_{v_{ij}}$ for $i,j=1,2$. Consider the two edges $e_i=\{v_{i1},v_{i2}\}$. If $e_1=e_2$, we can compute the expectation
\[\mathbb{E}(D_{x_1x_2})\leq (1-o(1))\frac{m^2n^{k-2}}{\Phi}.\]
Indeed, for each $W'\in \mathcal{I}(H,W)$ containing $e:=e_1=e_2$, we count the number of $G'$ in $\mathcal{H}$ whose weighted representation is $W'$ and covering the edges $x_1,x_2\in E(H[n])$. There are at most $\omega_{W'}(e)$ choices for both $f^{-1}(x_1)$ and $f^{-1}(x_2)$. Note that $\omega_{W'}(e)\leq m^2$. However, after those two choices, $\delta$ will be fixed. And there are still $(1-o(1))n^{k-2}$ choices for vertices in $V(G')\cap I_v$ with $v\in V(W')\setminus e$. Hence the computation follows again.

If $e_1\neq e_2$, we can check $\mathbb{E}(D_{e_1e_2})\leq (1-o(1))m^2n^{k-3}\log n/\Phi$ by similar counting and computation. Essentially, there are fewer choices for vertices in $V(G')\cap I_v$ with $v\in V(W')\setminus (e_1\cup e_2)$ after $\delta$ is fixed.

Finally, we check these degrees and codegrees are concentrated around their expectations. Observe that $D_x$ is a summation of independent random variables all taking values from $\{0,1\}$. By Chernoff bound (\cite[Corollary A.1.14]{AS2004}), for any $x\in \mathcal{V}$, we have
\[\Pr\left(|D_x-\mathbb{E}(D_x)|>\frac{1}{\sqrt{\log n}} \mathbb{E}(D_x)\right)<2\exp\left(\frac{-\mathbb{E}(D_x)}{3\log n}\right)\leq\exp(-\Omega(n)).\]
Here we used $k\geq 3$. On the other hand, there are at most $\binom{|V(H)|}{2}n^2$ many vertices in $\mathcal{V}$. Therefore, by union bound, the probability that every $x\in \mathcal{V}$ satisfies $|D_x-\mathbb{E}(D_x)|\leq \mathbb{E}(D_x)/\sqrt{\log n}$ converges to $1$ as $n$ tends to infinity. By a similar argument, the probability that every distinct pair $x_1,x_2\in \mathcal{V}$ satisfies $D_{x_1x_2}<2m^2n^{k-2}/\Phi$ converges to $1$ as $n$ tends to infinity.

To summarise, we have shown, when $n$ is large enough, $\mathcal{H}$ almost surely satisfies: All $x\in \mathcal{V}$ but $O(n\log n)$ of them have degree $d(x)=(1\pm o(1))n^{k-2}\log n/\Phi$; All $x\in \mathcal{V}$ have degree $d(x)<2n^{k-2}\log n/\Phi$; Any two distinct $x_1,x_2\in \mathcal{V}$ have codegree $d(x,y)<o(n^{k-2}\log n)$. Thus, there is a realization of $\mathcal{H}$ that satisfies the conditions of Theorem~\ref{Pippenger_Spencer}, meaning there is a matching which corresponds to $G$-packings of $H[n]$ covering all but $o(n^2)$ edges, as claimed.
\end{proof}

\section{Proof of Theorem~\ref{main}}\label{sec:main}
To prove Theorem~\ref{main}, we need the following lemma.
\begin{lemma}\label{K4_frac_pack}
Let $W$ be a complete weighted cgg on vertices $v_1\prec v_2\prec v_3\prec v_4$ with\begin{align*}
    &\omega_W(\{v_{1},v_2\})=\omega_W(\{v_{2},v_3\})=\omega_W(\{v_{3},v_4\})=\omega_W(\{v_{4},v_1\})>0,\\
    &\omega_W(\{v_{1},v_3\})=\omega_W(\{v_{2},v_4\})>0.
\end{align*} Then there exists some $K_m$ having a perfect fractional $W$-packing.
\end{lemma}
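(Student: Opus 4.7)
The plan is to establish this via linear-programming duality, in line with the strategy announced in the introduction. For a positive integer $m$ (to be chosen depending on $W$), the existence of a perfect fractional $W$-packing of $K_m$ is the feasibility of an LP with nonnegative variables $\phi(S)$ indexed by copies $S$ of $W$ in $K_m$ and one equation $\sum_{S}\phi(S)\omega_S(e)=1$ per edge $e\in E(K_m)$. Since $W$ is invariant under the dihedral $D_4$-action permuting its four vertices (all four sides have weight $a$, both diagonals weight $b$), and the LP is invariant under cyclic rotations of $V(K_m)$, averaging any feasible $\phi$ over the $\mathbb{Z}/m\mathbb{Z}$-action shows that we may restrict to $\phi(S)$ depending only on the gap pattern $(d_1,d_2,d_3,d_4)$ of $S$ modulo cyclic rotation. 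The edge-load then depends only on the edge length, reducing the LP to one equation per length $\ell\in\{1,\dots,\lfloor m/2\rfloor\}$.

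By Farkas' lemma, infeasibility of the reduced LP is equivalent to the existence of dual prices $y_\ell\in\mathbb{R}$ satisfying $\sum_\ell N_\ell y_\ell>0$ (where $N_\ell$ counts the length-$\ell$ edges) and, for every gap pattern $(d_1,d_2,d_3,d_4)$ with $d_i\ge 1$ and $\sum d_i=m$,
\[
a\sum_{i=1}^{4} y_{\ell(d_i)} + b\bigl(y_{\ell(d_1+d_2)} + y_{\ell(d_2+d_3)}\bigr)\le 0,\quad \text{where } \ell(d)=\min(d,m-d).
\]
I would rule out such $y$ by combining constraints from well-chosen shape classes. Natural candidates, given the dihedral symmetry of $W$, are: the square pattern $(n,n,n,n)$ when $m=4n$, giving $4ay_n+2by_{2n}\le 0$; rectangle patterns $(s,m/2-s,s,m/2-s)$ for $1\le s<m/2$, giving $2a(y_s+y_{m/2-s})+2by_{m/2}\le 0$; and mildly asymmetric shapes such as $(1,s,1,m-s-2)$ that fine-tune the balance across individual lengths. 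A suitable nonnegative combination of these inequalities, with multipliers depending on $a,b,m$, should produce $\sum_\ell N_\ell y_\ell\le 0$, contradicting positivity.

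The main obstacle is determining the correct combination of shape-inequalities, which is essentially equivalent to exhibiting the primal fractional packing. The dihedral symmetry of $W$ is central: rectangles and squares themselves share this symmetry, so their contributions to edges of a given length align naturally with the weight structure of $W$. Still, accommodating all ratios $a/b>0$ will likely require either a regime analysis (e.g., splitting based on whether $a/b$ is small or large, say $a\le 2b$ versus $a>2b$, as the toy case $m=6$ already suggests such a threshold) or choosing $m$ sufficiently large as a function of $a/b$ so that enough independent shape classes are available to balance the $a$- and $b$-contributions at every length simultaneously. Verifying nonnegativity of the resulting $\phi$ is the crucial technical point and requires careful bookkeeping of the shape multiplicities.
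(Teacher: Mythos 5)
Your setup is the same as the paper's: reduce to rotational equivalence classes (the paper's Observation on the compressed matrix $M_{K_m,W}$, valid because it takes $m$ odd), and apply Farkas' lemma to turn infeasibility into a dual vector $y$ that must be ruled out by exhibiting configurations whose constraints are jointly violated. Up to a sign convention your dual formulation matches the paper's exactly. But the entire content of the lemma lies in the step you leave open --- ``a suitable nonnegative combination of these inequalities, with multipliers depending on $a,b,m$, should produce a contradiction'' --- and you explicitly concede you do not know the combination. That is a genuine gap, not bookkeeping: a fixed finite list of symmetric shapes (squares, rectangles, and a one-parameter family like $(1,s,1,m-s-2)$) summed with constant multipliers will not in general dominate the positive part of $y$, whose mass can be distributed adversarially across the $\lfloor m/2\rfloor$ lengths. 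The paper's proof needs a two-stage argument that your proposal does not anticipate: first, trapezoids $(i,\,i_{\max}-i,\,i,\,\ast)$ anchored at the largest negative index force the diagonal weight $w_2$ to pay for each negative coordinate, yielding the quantitative bound $|I|<(w_1+w_2)/w_2$ on the number of negative dual coordinates; second, pigeonholing then produces a single coordinate $y_{i_0}<\tfrac{w_2}{w_1+w_2}N$, and a family of $\Theta(m)$ trapezoids $(i,i_0,i,i_0\pm 2i)$, $1\le i\le m'/4$, each reusing $y_{i_0}$, overwhelms the positive part $P$ once $m$ is chosen of order $(w_1+w_2)^2/(w_1w_2)$. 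Without the first stage there is no single ``very negative'' length to exploit, and without taking $m$ large in terms of $a/b$ no fixed shape list suffices.

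Two smaller points. Your square and rectangle patterns require $m\equiv 0 \pmod 4$ or $m$ even, which clashes with the cleanest form of the rotation-averaging reduction (the paper takes $m$ odd precisely so that the rotation group acts simply transitively on edges of each length, making the compressed system exactly equivalent to the original one); for even $m$ the length-$m/2$ orbit is half-sized and has nontrivial edge stabilizers, so the reduction needs extra care. Also, the regime split you suggest (small versus large $a/b$) is not how the difficulty resolves: the paper handles all ratios uniformly by letting $m$ grow like $(w_1+w_2)^2/(w_1w_2)$, and splits cases only on whether the bad coordinate $i_0$ lies below or above $m'/2$ (which determines whether the fourth side is $i_0+2i$ or $i_0-2i$).
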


Before we establish this lemma, we first use it to prove Theorem~\ref{main}. For a complete cgg whose vertices are $v_1\prec v_2\prec\dots\prec v_m$ in cyclic order, its \emph{rotation} refers to an automorphism that maps $v_i$ to $v_{i+r}$ for all $i\in [m]$ and some fixed $r\in \mathbb{Z}$, where the indices are meant modulo $m$.

\begin{proof}[Proof of Theorem~\ref{main}]
Let $G$ be the given cgg. If $\chi_c(G)=2$, a result of Brass, K\'arolyi and Valtr~\cite{BKV2003} states that
\[\text{ex}_{c}(n,G)=\left(1-\frac{1}{\chi_c(G)-1}\right)\binom{n}{2}+o(n^2)=o(n^2),\]
where $\text{ex}_{c}(n,G)$ is the maximum number of edges in a cgg on $n$ vertices that doesn't contain $G$ as a sub-cgg. Clearly, for a $G$-packing of $K_n$ with maximal cardinality, the number of uncovered edges is less than $\text{ex}_{c}(n,G)$. So for this case we conclude $G$ is packable.

If $\chi_c(G)=3$, let $\mathcal{P}$ be an interval partition of $G$ into 3 parts with each edge having its endpoints in different parts, and $W$ the weighted representation of $G$ constructed from $\mathcal{P}$. Write the vertices of $W$ as $v_1\prec v_2\prec v_3$, and consider another weighted cgg $W'$ on these vertices with $\omega_{W'}(\{v_1,v_2\})$, $\omega_{W'}(\{v_2,v_3\})$, and $\omega_{W'}(\{v_3,v_1\})$ all equal to
\[\omega_W(\{v_1,v_2\})+\omega_W(\{v_2,v_3\})+\omega_W(\{v_3,v_1\}).\]
By considering the rotations of $W$, we can see that $W'$ has a perfect fractional $W$-packing. The unweighted triangle $K_3$ has a perfect fractional $W'$-packing just by scaling. So $K_3$ has a perfect fractional $W$-packing and because $K_3$ is packable (see Figure~\ref{fig:plane_hamiltonian}), using Lemma~\ref{frac_pack_lemma} we conclude the theorem.

If $\chi_c(G)=4$, let $W$ be a weighted representation of $G$ constructed from an interval partition with 4 parts. Write the vertices of $W$ as $v_1\prec v_2\prec v_3\prec v_4$ and consider another weighted cgg $W'$ on these vertices such that every edge with length $l$ in $W'$ has weight $w_l$, where
\begin{align*}
    &w_1:=\omega_W(\{v_1,v_2\})+\omega_W(\{v_2,v_3\})+\omega_W(\{v_3,v_4\})+\omega_W(\{v_4,v_1\}),\\
    &w_2:=2\omega_W(\{v_1,v_3\})+2\omega_W(\{v_2,v_4\}).
\end{align*}
Similarly, $W'$ has a perfect fractional $W$-packing by rotations. It's easy to check that $\chi_c(G)=4$ guarantees $w_1>0$. If $w_2=0$, then the plane cycle $C_4$ of length 4 has a perfect fractional $W'$-packing just by scaling. So $C_4$ has a perfect fractional $W$-packing and because $C_4$ is packable (see Figure~\ref{fig:plane_hamiltonian}), by Lemma~\ref{frac_pack_lemma} we are done. If $w_2>0$, by Lemma~\ref{K4_frac_pack}, a complete cgg $K_m$ has a perfect fractional $W'$-packing, hence $K_m$ also has a perfect fractional $W$-packing. Then we conclude Theorem~\ref{main} by Lemma~\ref{frac_pack_lemma} and the next observation.
\end{proof}
\begin{observation}\label{Km_packable}
Any complete (unweighted) cgg $K_m$ is packable.
\end{observation}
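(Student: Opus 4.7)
The plan is to reduce the convex packing question to the analogous combinatorial one, where the result is well known. The key observation (already noted in Section~\ref{sec:intro}) is that any two complete cgg's on the same number of vertices are isomorphic as cgg's. Consequently every $m$-element subset of $V(K_n)$ induces a sub-cgg of $K_n$ isomorphic to $K_m$, regardless of which vertices are chosen, and a convex $K_m$-packing of $K_n$ coincides with a combinatorial $K_m$-packing of the underlying abstract complete graph.

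Having made this identification, I would apply Theorem~\ref{Pippenger_Spencer} to the $\binom{m}{2}$-uniform hypergraph $\mathcal{H}$ whose vertex set is $E(K_n)$ and whose hyperedges are the edge sets of the $\binom{n}{m}$ copies of $K_m$ inside $K_n$. Every vertex $x\in V(\mathcal{H})$ has degree exactly $D:=\binom{n-2}{m-2}=\Theta(n^{m-2})$, and any two distinct vertices of $\mathcal{H}$ have codegree at most $\binom{n-3}{m-3}=O(n^{m-3})=o(D)$. Hypotheses (1)--(3) of Theorem~\ref{Pippenger_Spencer} are therefore satisfied with $\gamma$ shrinking to $0$ as $n\to\infty$, producing a matching of size $(1-o(1))\binom{n}{2}/\binom{m}{2}$. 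Such a matching is precisely a convex $K_m$-packing of $K_n$ leaving $o(n^2)$ edges of $K_n$ uncovered, witnessing that $K_m$ is packable.

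There is no real obstacle in the argument; the only point that deserves explicit mention is the opening reduction, since this is what makes the cyclic order irrelevant and lets us borrow a purely combinatorial packing result. One could alternatively invoke Wilson's theorem~\cite{wilson1975decompositions} to obtain exact combinatorial $K_m$-decompositions of $K_n$ whenever the relevant divisibility conditions hold, but Theorem~\ref{Pippenger_Spencer} delivers an asymptotic packing for all sufficiently large $n$ in one shot and fits better with the style of the rest of the paper.
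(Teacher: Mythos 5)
Your proof is correct and follows essentially the same route as the paper: the paper likewise notes that all complete cgg's of the same size are isomorphic, so a combinatorial $K_m$-packing of $K_n$ is automatically a convex one, and then invokes the fact (itself a standard R\"odl Nibble consequence, as you spell out) that every fixed graph is combinatorially packable. Your degree and codegree computations for the auxiliary hypergraph are accurate, so the appeal to Theorem~\ref{Pippenger_Spencer} goes through.
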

\begin{proof}
Since every abstract graph is combinatorially packable, there are combinatorial $K_m$-packings of $K_n$ that cover all but $o(n^2)$ edges. But we know that all complete cgg's of the same size are isomorphic to each other, so such combinatorial $K_m$-packings are also convex $K_m$-packings.
\end{proof}

The remainder of the section is dedicated to the proof of Lemma~\ref{K4_frac_pack}, which is based on the well-known Farkas' lemma~\cite{matousek2006understanding}.
\begin{lemma}[Farkas' Lemma]\label{farkas}
Let $M\in \mathbb{R}^{a\times b}$ and $z\in \mathbb{R}^a$. Then exactly one of the following two assertions is true:
\begin{enumerate}
    \item There exists $x\in \mathbb{R}^b$ such that $x\geq 0$ and $Mx=z$.
    \item There exists $y\in \mathbb{R}^a$ such that $z^Ty<0$ and $M^Ty\geq 0$.
\end{enumerate}
\end{lemma}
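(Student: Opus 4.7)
The plan is to derive Farkas' lemma from the separating hyperplane theorem applied to the finitely generated convex cone $C := \{Mx : x\in \mathbb{R}^b,\ x\geq 0\}$, which is the conic hull of the columns of $M$. The dichotomy in the statement translates into a clean geometric alternative: either $z\in C$, which is exactly assertion (1), or $z\notin C$, in which case a separation argument produces the certificate $y$ of assertion (2).

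First I would check mutual exclusion. If there existed $x\geq 0$ with $Mx = z$ and simultaneously $y$ with $M^Ty\geq 0$ and $z^Ty < 0$, then $0 \leq x^T(M^Ty) = (Mx)^T y = z^T y < 0$, a contradiction. So at most one of the two assertions holds.

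For existence, suppose (1) fails, i.e.\ $z\notin C$. Granting that $C$ is closed (discussed below), the strict separating hyperplane theorem applied to the closed convex set $C$ and the point $z$ yields $y\in \mathbb{R}^a$ and $\alpha\in \mathbb{R}$ with $y^Tz < \alpha \leq y^Tc$ for every $c\in C$. Since $0\in C$, we obtain $\alpha \leq 0$, hence $y^Tz < 0$. For any fixed $c\in C$ and any $\lambda \geq 0$ we have $\lambda c \in C$, so $\lambda\, y^T c \geq \alpha$; letting $\lambda \to \infty$ forces $y^T c \geq 0$. Specialising $c$ to the columns $Me_j$ of $M$ gives $(M^T y)_j \geq 0$ for every $j$, so $M^T y \geq 0$, which establishes (2).

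The main obstacle is the closedness of $C$, which fails for arbitrary convex cones but holds for \emph{finitely generated} ones. I would deduce it via Carath\'eodory's theorem for conical hulls: every point of $C$ lies in the conic hull of some linearly independent subset $S$ of the columns of $M$, so $C$ equals the finite union $\bigcup_S \{M_S t : t\in \mathbb{R}_{\geq 0}^{|S|}\}$, where $M_S$ denotes the submatrix of $M$ indexed by $S$. Each such $M_S$ is an injective linear map, hence a homeomorphism onto its image, and its image is a closed subspace of $\mathbb{R}^a$; therefore the image of the closed set $\mathbb{R}_{\geq 0}^{|S|}$ under $M_S$ is closed in $\mathbb{R}^a$, and a finite union of closed sets is closed. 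This gives closedness of $C$ and completes the proof. An alternative route that avoids topology is induction on $b$ via Fourier--Motzkin elimination, but it is longer to present cleanly.
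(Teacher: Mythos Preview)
The paper does not supply its own proof of Farkas' lemma: the statement is cited as a well-known result from a standard linear programming reference and used as a black box in the proofs of Lemmas~\ref{K4_frac_pack} and~\ref{Kk_frac_pack}. Your proposal, by contrast, gives a complete and correct argument along the standard geometric route. Mutual exclusion is handled by the chain $0 \le x^T(M^Ty) = z^Ty < 0$; existence comes from separating $z$ from the closed convex cone $C = \{Mx : x \ge 0\}$ via the hyperplane theorem, then using the cone property $\lambda c \in C$ to upgrade the separating inequality to $y^Tc \ge 0$ for all $c \in C$, hence $M^Ty \ge 0$. Your treatment of the one genuine subtlety---closedness of $C$---is also correct: Carath\'eodory for conic hulls expresses $C$ as a finite union of images of closed orthants under injective linear maps, each of which is closed. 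So your proof is sound; it simply goes beyond what the paper provides.
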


To approach Lemma~\ref{K4_frac_pack} using the Farkas' lemma, we define the following matrix: Two sub-cgg's $S_1,S_2\in \mathcal{I}(K_m,W)$ are \emph{rotationally equivalent} if there is a rotation of $K_m$ that transforms $S_1$ to $S_2$. For every $S\in \mathcal{I}(K_m,W)$, we denote its rotational equivalence class by $\overline{S}$. Let $M_{K_m,W}$ be the matrix whose rows are indexed by all possible edge-lengths in $K_m$, and columns indexed by rotational equivalence classes in $\mathcal{I}(K_m,W)$, and set\begin{equation}\label{eq:compressed_matrix_def}
    M_{K_m,W}(l,\overline{S})=\sum_{e\in E(S);l_{K_m}(e)=l}\omega_{S}(e).
\end{equation}

\begin{observation}\label{compressed_matrix}
If $m$ is odd and there exists a vector $x\geq 0$ such that $M_{K_m,W}x=\Vec{1}$, then $K_m$ has a perfect fractional $W$-packing.
\end{observation}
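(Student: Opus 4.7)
The plan is to leverage the rotational symmetry of $K_m$ to reduce the infinite-looking collection of per-edge constraints of a perfect fractional $W$-packing to a finite one, indexed by length classes, that is precisely the system $M_{K_m,W} x = \vec{1}$. Since $m$ is odd, a quick check shows the rotation group $\mathbb{Z}/m\mathbb{Z}$ acts on $E(K_m)$ with every orbit having full size $m$, and the orbits are exactly the length classes. I will therefore search for $\phi : \mathcal{I}(K_m,W) \to \mathbb{R}_{\geq 0}$ that is constant on every rotational equivalence class $\overline{S}$, say with value $c_{\overline{S}}$. For such an ansatz, the constraint $\sum_{S'} \phi(S')\omega_{S'}(e) = 1$ depends only on the length $l$ of $e$, because any two edges of the same length differ by a rotation that permutes $\overline{S}$ internally.

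The next step is to evaluate the sum $\sum_{S' \in \overline{S}} \omega_{S'}(e)$ for an edge $e$ of length $l$. A double counting of $\sum_{r \in \mathbb{Z}/m\mathbb{Z}} \omega_{r\cdot S}(e)$ — using on one side that each $S' \in \overline{S}$ is produced by exactly $m/|\overline{S}|$ rotations, and on the other side that $r^{-1}\cdot e$ runs through all $m$ edges of length $l$ once each (here $m$ odd is essential) — gives
$$\sum_{S' \in \overline{S}} \omega_{S'}(e) \;=\; \frac{|\overline{S}|}{m}\cdot M_{K_m,W}(l,\overline{S}).$$
Substituting this into the reduced constraint and setting $c_{\overline{S}} := (m/|\overline{S}|)\cdot x_{\overline{S}}$, the system collapses to $M_{K_m,W}\,x = \vec{1}$, which holds by hypothesis. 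Nonnegativity of $\phi$ follows from $x \geq 0$, so $\phi$ is a perfect fractional $W$-packing.

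The only delicate point is bookkeeping possible nontrivial rotational stabilizers of individual $S \in \mathcal{I}(K_m,W)$, which force $|\overline{S}|$ to be a proper divisor of $m$; this is precisely why the factor $m/|\overline{S}|$ appears in the definition of $c_{\overline{S}}$. The oddness of $m$ is used only once, but crucially, to guarantee that every edge orbit has full size $m$, ensuring the per-edge constraints genuinely collapse to per-length constraints with no residual asymmetry between edges of the same length.
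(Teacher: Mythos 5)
Your proof is correct and follows essentially the same route as the paper: define $\phi$ to be constant on each rotational equivalence class and use that, for odd $m$, the rotation group acts freely and transitively on the edges of any fixed length, so the per-edge constraints collapse to the per-length system $M_{K_m,W}x=\vec{1}$. Your extra normalization by $m/|\overline{S}|$ to account for possible nontrivial stabilizers is a careful refinement of a point the paper's one-line verification glosses over, but it is not a different argument.
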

\begin{proof}
For every $S\in \mathcal{I}(K_m,W)$, let $\phi(S)$ be the entry of $x$ indexed by $\overline{S}$. Observe that for any two edges $e_1,e_2\in E(K_m)$ having the same length, there's a unique rotation that transforms $e_1$ to $e_2$. (This requires $m$ to be odd.) Also note that every edge of $K_m$ has weight 1. We can check that $M_{K_m,W}x=\Vec{1}$ is equivalent to \eqref{eq:perfect_fractional_packing_def}.
\end{proof}

\begin{proof}[Proof of Lemma~\ref{K4_frac_pack}] Write the weight of length-$l$ edges in $W$ as $w_l$ for $l=1,2$, and set
\[m=2m'+1\text{\quad with\quad}m'=\left\lceil\frac{12(w_1+w_2)^2}{w_1w_2}\right\rceil.\]
Note that $m'$ is the largest edge-length in $K_m$. For the rest of this proof, all the edge-lengths are defined in $K_m$.

By Observation~\ref{compressed_matrix}, it suffices to show there exists some vector $x\geq 0$ with $M_{K_m,W}x=\Vec{1}$. For the sake of contradiction, suppose there's no such $x$. By Lemma~\ref{farkas}, there exists $y\in \mathbb{R}^{m'}$ such that\begin{itemize}
    \item $\sum^{m'}_{i=1}y_{i}=(\Vec{1})^T\cdot y<0$; and
    \item For any $S\in \mathcal{I}(K_m,W)$, whose $w_1$-weighted edges have lengths $i_1,i_2,i_3,i_4$ and $w_2$-weighted edges have lengths $i_5,i_6$, we have
    \[ y(\overline{S}):= w_1(y_{i_1}+y_{i_2}+y_{i_3}+y_{i_4})+w_2(y_{i_5}+y_{i_6})=(M^T_{K_m,W}\cdot y)_{\overline{S}}\geq 0.\]
\end{itemize} Let $I:=\{i\in [m']; y_i<0\}$, $N=\sum_{i\in I} y_i$ and $P=\sum_{i\not\in I} y_i$. Then we have $N+P<0$.

Let $i_{max}$ be the maximum element in $I$. For each $i\in I\setminus i_{max}$, choose $S_i\in \mathcal{I}(K_m,W)$ such that its $w_1$-weighted edges have lengths $i,i_{max}-i,i,\min\{i_{max}+i, m-i_{max}-i\}$ following the cyclic order. See Figure~\ref{fig:trapezoid}. Note that $\overline{S_i}$ is uniquely determined in this way. Consider all the edges from all chosen $S_i$: for each $i\in I\setminus i_{max}$, there are at least two $w_1$-weighted edges having length $i$; for each $j\in [m']\setminus I$, there are at most two $w_1$-weighted edges having length $j$ (notice that $j$ is not equal to both $i_{max}+i_1$ and $i_{max}-i_2$ for $i_1,i_2\in I$); there are $2(|I|-1)$ many $w_2$-weighted edges having length $i_{max}$. So we have\begin{align*}
    \sum_{i\in I\setminus i_{max}} y(\overline{S_i})&\leq 2w_1(N-y_{i_{max}})+2w_1P+2(|I|-1)w_2y_{i_{max}}\\
    &<2((|I|-1)w_2-w_1)y_{i_{max}},
\end{align*}where the last inequality holds since $N+P<0$.

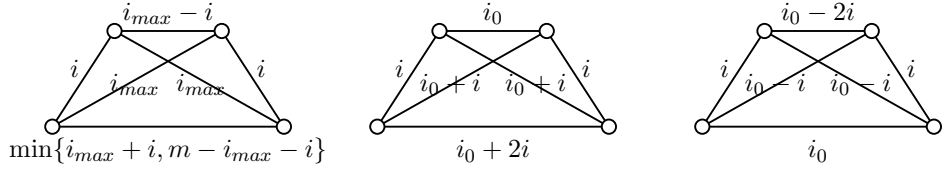
\begin{figure}[ht]
    \centering
    \scalebox{0.9}{\begin{tikzpicture}[thick, scale=.7]
    \tikzstyle{uStyle}=[shape = circle, minimum size = 6.0pt, inner sep = 0pt,
    outer sep = 0pt, draw, fill=white]
    \tikzstyle{lStyle}=[shape = rectangle, minimum size = 20.0pt, inner sep = 0pt,
outer sep = 2pt, draw=none, fill=none]
    \tikzset{every node/.style=uStyle}
    
    \begin{scope}[xshift=-2.7in]
    
    \draw (45:1.6cm) node(v1) {};
    \draw (135:1.6cm) node(v2) {};
    \draw (200:2.6cm) node(v3) {};
    \draw (340:2.6cm) node(v4) {};
    
    \foreach \i/\j in
    {1/2,2/3,3/4,4/1,1/3,2/4}
    \draw (v\i) edge (v\j);
    
    \draw (90:1.5cm) node[lStyle]{$i_{max}-i$};
    \draw (170:2cm) node[lStyle]{$i$};
    \draw (10:2cm) node[lStyle]{$i$};
    \draw (270:1.4cm) node[lStyle]{$\min\{i_{max}+i,m-i_{max}-i\}$};
    \draw (180:0.7cm) node[lStyle]{$i_{max}$};
    \draw (0:0.7cm) node[lStyle]{$i_{max}$};
    
    \end{scope}
    
    \begin{scope}[xshift=0in]
    
    \draw (45:1.6cm) node(v1) {};
    \draw (135:1.6cm) node(v2) {};
    \draw (200:2.6cm) node(v3) {};
    \draw (340:2.6cm) node(v4) {};
    
    \foreach \i/\j in
    {1/2,2/3,3/4,4/1,1/3,2/4}
    \draw (v\i) edge (v\j);
    
    \draw (90:1.5cm) node[lStyle]{$i_0$};
    \draw (170:2cm) node[lStyle]{$i$};
    \draw (10:2cm) node[lStyle]{$i$};
    \draw (270:1.4cm) node[lStyle]{$i_0+2i$};
    \draw (180:0.9cm) node[lStyle]{$i_{0}+i$};
    \draw (0:0.9cm) node[lStyle]{$i_{0}+i$};
    
    \end{scope}
    
    \begin{scope}[xshift=2.7in]
    
    \draw (45:1.6cm) node(v1) {};
    \draw (135:1.6cm) node(v2) {};
    \draw (200:2.6cm) node(v3) {};
    \draw (340:2.6cm) node(v4) {};
    
    \foreach \i/\j in
    {1/2,2/3,3/4,4/1,1/3,2/4}
    \draw (v\i) edge (v\j);
    
    \draw (90:1.5cm) node[lStyle]{$i_0-2i$};
    \draw (170:2cm) node[lStyle]{$i$};
    \draw (10:2cm) node[lStyle]{$i$};
    \draw (270:1.4cm) node[lStyle]{$i_0$};
    \draw (180:0.9cm) node[lStyle]{$i_{0}-i$};
    \draw (0:0.9cm) node[lStyle]{$i_{0}-i$};
    
    \end{scope}
    
    \end{tikzpicture}}
    \caption{Configurations of $S_i$, $S'_i$, and $S''_i$. Numbers represent the edge-lengths.}
    \label{fig:trapezoid}
\end{figure}

Now, by the inequality above and the assumption $y(\overline{S_i})\geq 0$, we conclude $(|I|-1)w_2-w_1< 0$, so $|I|< \frac{w_1+w_2}{w_2}$. By the pigeonhole principle, there exists $i_0\in I$ such that $y_{i_0}<\frac{w_2}{w_1+w_2}N$.

If $i_0\leq m'/2$, for each $1\leq i\leq m'/4$, we choose $S'_i\in \mathcal{I}(K_m,W)$ such that its $w_1$-weighted edges have lengths $i,i_0,i, i_0+2i$ following the cyclic order. See Figure~\ref{fig:trapezoid}. Then $\overline{S'_i}$ is uniquely determined and a similar analysis of the edge-lengths gives
\[\sum_{i=1}^{m'/4} y(\overline{S'_i})\leq \frac{m'}{4} w_1y_{i_0}+3(w_1+w_2)P<\frac{m'}{4}\frac{w_1w_2}{w_1+w_2}N+3(w_1+w_2)P<0,\]
where the last inequality is by the value of $m'$ and $N+P<0$. This contradicts the assumption that $y(\overline{S'_i})\geq 0$ for all possible $i$.

If $i_0> m'/2$, for each $1\leq i\leq m'/4$, we choose $S''_i\in \mathcal{I}(K_m,W)$ such that its $w_1$-weighted edges have lengths $i,i_0,i, i_0-2i$ following the cyclic order. See Figure~\ref{fig:trapezoid}. Then $\overline{S''_i}$ is uniquely determined and a similar analysis of the edge-lengths gives
\[\sum_{i=1}^{m'/4} y(\overline{S''_i})\leq \frac{m'}{4} w_1y_{i_0}+3(w_1+w_2)P<\frac{m'}{4}\frac{w_1w_2}{w_1+w_2}N+3(w_1+w_2)P<0,\]
where the last inequality is by the value of $m'$ and $N+P<0$. This is also a contradiction, hence we conclude the proof.
\end{proof}

\section{Proof of Theorem~\ref{higher}}\label{sec:higher}
The proof of Theorem~\ref{higher} is based on the following lemma.
\begin{lemma}\label{Kk_frac_pack}
For every integer $k>2$ there is an absolute constant $C_k$ with the following property. Suppose $W$ is a complete weighted cgg on $2k$ vertices with every length-$l$ edge having weight $w_l$. If $w_1>0$ and $w_{k}\geq C_k \sum_{l<k} w_l$, then there exists some $K_m$ having a perfect fractional $W$-packing.
\end{lemma}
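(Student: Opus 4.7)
The plan is to adapt the Farkas-lemma approach used for Lemma~\ref{K4_frac_pack}. Choose $m = 2m'+1$ with $m'$ a large integer depending on $k$ and the weights $w_l$. By Observation~\ref{compressed_matrix}, it suffices to find $x \geq 0$ with $M_{K_m,W}\, x = \vec{1}$; assume for contradiction that no such $x$ exists. By Farkas' lemma there is $y \in \mathbb{R}^{m'}$ with $\sum_i y_i < 0$ and $y(\overline{S}) \geq 0$ for every $S \in \mathcal{I}(K_m, W)$. Define $I := \{i : y_i < 0\}$, $i_{\max} := \max I$, $N := \sum_{i \in I} y_i$, and $P := \sum_{i \notin I} y_i$, so that $N + P < 0$.

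The key new geometric construction generalizes the trapezoid of Lemma~\ref{K4_frac_pack}. For any strictly increasing sequence $0 = a_1 < a_2 < \cdots < a_k < i_{\max}$, the embedding $v_j \mapsto a_j$ and $v_{j+k} \mapsto a_j + i_{\max}$ (for $j \in [k]$) places a copy of $W$ inside $K_m$ so that all $k$ diameters of $W$ have $K_m$-length exactly $i_{\max}$, while non-diameter edges take $K_m$-lengths of the form $a_p - a_q$ or $i_{\max} \pm (a_p - a_q)$. Call such an embedding \emph{standard}. In the first phase we bound $|I|$: for each $i \in I \setminus \{i_{\max}\}$ with $(k-1)i < i_{\max}$, let $S_i$ be the standard embedding with $a_j := (j-1) i$. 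Then all $2(k-1)$ within-half edges of $W$-length $1$ have $K_m$-length $i$. Summing $y(\overline{S_i}) \geq 0$ over such $i$ and bounding the positive contributions from $y_j$ ($j \notin I$) by a constant-bounded multiple of $P$ (analogous to the $k = 2$ case), one obtains an inequality which, combined with $w_k \geq C_k \sum_{l<k} w_l$ and $C_k$ large enough, forces $|I| \leq B_k$ for a constant $B_k$ depending only on $k$.

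In the second phase, the pigeonhole principle gives $i_0 \in I$ with $y_{i_0} \leq N / B_k$, and we split into cases $i_0 \leq m'/2$ and $i_0 > m'/2$ as in the proof of Lemma~\ref{K4_frac_pack}. In each case we construct a second family $\{T_j\}_{j \in [1, m'/(4k)]}$ of standard embeddings such that some non-diameter edge of $T_j$ carries $K_m$-length $i_0$: this is arranged by setting, say, $a_1 = 0$, $a_2 = j$, and choosing $a_3, \ldots, a_k$ so that a designated cross-edge (connecting the two halves) lands at $K_m$-length $i_0$ (or $i_0 \pm O(j)$, in the second case). Summing $y(\overline{T_j}) \geq 0$ produces a negative contribution of order $m' \cdot w_{l_0} \cdot y_{i_0}$ (where $l_0 \in [1,k-1]$ is the $W$-length of the designated edge) balanced against a positive $O_k(\sum_l w_l) \cdot P$; choosing $m'$ polynomial in $k$ and in $\sum_l w_l / w_{l_0}$ forces $\sum_j y(\overline{T_j}) < 0$, the desired contradiction. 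The main obstacle is the combinatorial bookkeeping required to control the non-diameter edge-lengths across both families: we must ensure that these lengths avoid $i_{\max}$, that $i_0$ is hit the intended number of times, and that each $j \notin I$ is visited only $O_k(1)$ times so that the positive contributions remain collectively bounded by a constant multiple of $P$. This is more delicate than in the $k = 2$ case because $W$ has $2k(k-1)$ non-diameter edges, which is what ultimately determines the size of $C_k$.
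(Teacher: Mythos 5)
Your overall architecture (Farkas' lemma via Observation~\ref{compressed_matrix}, a first family of copies to localize the negative mass of $y$, then a long second family to force a contradiction) matches the paper's, but your first phase has a genuine gap. The standard embedding $a_j=(j-1)i$ keeps all $k$ diameters at length $i_{\max}$ and designates only the $w_1$-weighted edges at length $i$. In the $k=2$ case this works because the \emph{entire} positive contribution is at most $2w_1P$, which is absorbed by the $2w_1N$ coming from the designated edges, so the surviving inequality is a multiple of $y_{i_{\max}}$ and the unknown magnitude of $y_{i_{\max}}$ cancels out. For $k\ge 3$ your copies also contain within-half and cross edges of $W$-lengths $2,\dots,k-1$, whose total positive contribution is of order $O_k(1)\sum_{l<k}w_l\,P$; the hypothesis only bounds $\sum_{l<k}w_l$ relative to $w_k$, not relative to $w_1$, so these terms cannot be absorbed by the $2(k-1)w_1N$ from the designated edges. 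Writing $I^{*}$ for the set of $i\in I$ with $(k-1)i<i_{\max}$, what survives is an inequality of the shape $kw_k\,|I^{*}|\,(-y_{i_{\max}})\le O_k(1)\sum_{l<k}w_l\,(-N)$, which yields no bound on $|I^{*}|$ when $|y_{i_{\max}}|$ is much smaller than $|N|$. Moreover, the constraint $(k-1)i<i_{\max}$ means you never address the elements of $I$ in $[i_{\max}/(k-1),\,i_{\max}]$, where all of the negative mass might sit. So the claim $|I|\le B_k$ is not established, and the pigeonhole step opening your second phase has nothing to stand on.

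The paper's fix is a genuinely different first-phase construction: for each $i\in I$ with $i\ge k$, write $i=kq_i+r_i$ and place the copy with consecutive gaps $q_i,\dots,q_i,\,q_i+r_i,\,q_i,\dots,q_i$, so that \emph{all $k$ diameters have length exactly $i$} rather than $i_{\max}$. Summing over such $i$, the diameters contribute $kw_k(N-N')$, where $N'$ is the negative mass on $\{1,\dots,k-1\}$, and the hypothesis $w_k\ge C_k\sum_{l<k}w_l$ beats the $O(k^2)\sum_{l<k}w_l\,P$ error outright, with no absorption by $w_1$ needed. The conclusion is not that $|I|$ is bounded, but that $N'<N/2$: at least half of the negative mass sits on the bounded set $\{1,\dots,k-1\}$. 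The second phase then sums over all $i\in I'=I\cap[k-1]$ using copies whose designated $w_1$-edge has one of these small lengths, much as you propose. Replacing your first family by this one is the essential missing idea.
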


\begin{proof}[Proof of Theorem~\ref{higher}] Let $C_k$ be as concluded by Lemma~\ref{Kk_frac_pack}. Recall that we wish to prove any cgg $G$ satisfying 
\[E_1\neq\emptyset\text{ and }|E_{k}|\geq C_k\cdot \sum_{l<k}|E_l|\]
is packable. Here $\{E_l; l=1,\dots,k\}$ are the edge collections associated with an interval partition $\mathcal{P}=\{V_1,\dots,V_{2k}\}$ as described in the statement of this theorem.

Denote the weighted representation of $G$ constructed from $\mathcal{P}$ as $W$ and its vertices as $v_1\prec \dots \prec v_{2k}$~($k>2$). Consider another weighted cgg $W'$ on these vertices such that every edge with length $l$ in $W'$ has weight $w_l$, where
\[w_{k}=2\times\sum_{l_W(e)=k}\omega_W(e)\text{\quad and\quad} w_l=\sum_{l_W(e)=l}\omega_W(e)\text{\quad for all $l=1,\dots, k-1$}.\]

By considering the rotations of $W$, we can check that $W'$ has a perfect fractional $W$-packing. Observe that we have $w_{k}=2|E_{k}|$ and $w_l=|E_l|$ for $1\leq l< k$. Then by hypothesis and Lemma~\ref{Kk_frac_pack}, some $K_m$ has a perfect fractional $W'$-packing, hence also a perfect fractional $W$-packing. So we conclude the proof by Observation~\ref{Km_packable} and Lemma~\ref{frac_pack_lemma}.\end{proof}

\begin{proof}[Proof of Lemma~\ref{Kk_frac_pack}]
Let $C_k=16k$ and $W$ be as described. Note that $k$ is the largest edge-length in $W$. We take
\[m=2m'+1\text{\quad with\quad}m'=\left\lceil 24k^3w_1^{-1}\sum_{l=1}^k w_l \right\rceil.\]
Note that $m'$ is the largest edge-length in $K_m$. For the rest of this proof, all the edge-lengths are defined in $K_m$.

By Observation~\ref{compressed_matrix}, it suffices to show there exists some vector $x\geq 0$ with $M_{K_m,W}x=\Vec{1}$ for $M_{K_m,W}$ defined in \eqref{eq:compressed_matrix_def}. For the sake of contradiction, suppose there's no such $x$, by Lemma~\ref{farkas}, there exists $y\in \mathbb{R}^{m'}$ such that\begin{itemize}
    \item $\sum^{m'}_{i=1}y_{i}=(\Vec{1})^T\cdot y<0$;
    \item For any $S\in \mathcal{I}(K_m,W)$, we have
    \[y(\overline{S}):=\sum_{e\in E(S)} \omega_S(e)\cdot y_{l_{K_m}(e)}=(M^T_{K_m,W}\cdot y)(\overline{S})\geq 0.\]
\end{itemize}

Let $I:=\{i\in [m']; y_i<0\}$, $N=\sum_{i\in I} y_i$, and $P=\sum_{i\not\in I} y_i$. Then we have $N+P<0$. Also, write $I'=\{i\in I; i<k\}$ and $N'=\sum_{i\in I'} y_i$.

For each $i\in I\setminus I'$, write $i=kq_i + r_i$ uniquely with integers $q_i>0$ and $0\leq r_i<k$. We choose $S_i\in \mathcal{I}(K_m,W)$ such that the lengths of its $w_1$-weighted edges listed following the cyclic order are
\[\underbrace{q_i,q_i,\dots,q_i}_{\text{$k-1$ times}}\ ,\ q_i+r_i\ ,\ \underbrace{q_i,q_i,\dots,q_i}_{\text{$k-1$ times}}\ ,\ \min\{2i-q_i-r_i,m-(2i-q_i-r_i)\}.\]
See Figure~\ref{fig:hex}. Note that $\overline{S_{i}}$ is uniquely determined in this way.

\begin{figure}[ht]
    \centering
    \begin{tikzpicture}[thick, scale=.7]
    \tikzstyle{uStyle}=[shape = circle, minimum size = 6.0pt, inner sep = 0pt,
    outer sep = 0pt, draw, fill=white]
    \tikzstyle{lStyle}=[shape = rectangle, minimum size = 20.0pt, inner sep = 0pt,
outer sep = 2pt, draw=none, fill=none]
    \tikzset{every node/.style=uStyle}
    
    \begin{scope}[xshift=-2.7in]
    
    \draw (45:1.6cm) node(v1) {};
    \draw (135:1.6cm) node(v2) {};
    \draw (190:2cm) node(v3) {};
    \draw (225:2.6cm) node(v4) {};
    \draw (315:2.6cm) node(v5) {};
    \draw (350:2cm) node(v6) {};
    \foreach \i/\j in
    {1/2,2/3,3/4,4/5,5/6,6/1}
    \draw (v\i) edge (v\j);
    
    \draw (90:1.5cm) node[lStyle]{$q_i+r_i$};
    \draw (16:1.9cm) node[lStyle]{$q_i$};
    \draw (164:1.9cm) node[lStyle]{$q_i$};
    \draw (207:2.5cm) node[lStyle]{$q_i$};
    \draw (333:2.5cm) node[lStyle]{$q_i$};
    \draw (270:2.4cm) node[lStyle]{$\min\{5q_i+r_i,m-5q_i-r_i\}$};
    
    \end{scope}
    
    \begin{scope}[xshift=0in]
    
    \draw (45:1.6cm) node(v1) {};
    \draw (135:1.6cm) node(v2) {};
    \draw (190:2cm) node(v3) {};
    \draw (225:2.6cm) node(v4) {};
    \draw (315:2.6cm) node(v5) {};
    \draw (350:2cm) node(v6) {};
    \foreach \i/\j in
    {1/2,2/3,3/4,4/5,5/6,6/1}
    \draw (v\i) edge (v\j);
    
    \draw (90:1.5cm) node[lStyle]{$i$};
    \draw (16:1.9cm) node[lStyle]{$j$};
    \draw (164:1.9cm) node[lStyle]{$j$};
    \draw (207:2.5cm) node[lStyle]{$j$};
    \draw (333:2.5cm) node[lStyle]{$j$};
    \draw (270:2.4cm) node[lStyle]{$i+4j$};
    
    \end{scope}
    
    \end{tikzpicture}
    \caption{Configurations of $S_i$ and $S_{ij}$ for $k=3$. Numbers represent the edge-lengths.}
    \label{fig:hex}
\end{figure}
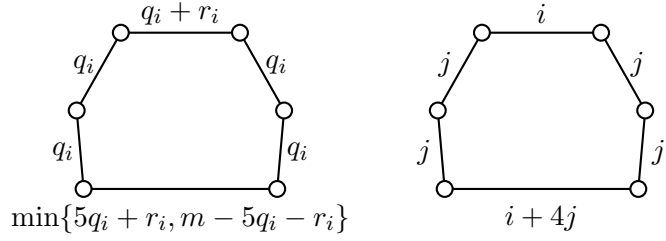

Observe that all $w_k$-weighted edges of $S_i$ have length $i$ and there are $k$ many of them. On the other hand, for any $l\in [k]$, every $w_l$-weighted edge of $S_i$ has length equal to either $lq_i$, or $lq_i+r_i$, or $i+(k-l)q_i$, or $m-(i+(k-l)q_i)$. And for each such value, there are at most $2k$ many such edges of $S_i$ having it as their length. Hence we can check, among all chosen $S_i$, there are at most $8k^2$ many $w_{l}$-weighted edges having length $j$ for each $j\in [m']\setminus I$. Overall we have
\begin{align*}
    \sum_{i\in I\setminus I'} y(\overline{S_i})&\leq kw_{k}(N-N')+ 8k^2\sum_{l< k} w_lP\\
    &=kw_{k}\left(\frac{1}{2}N-N'+\frac{1}{2}N\right)+ 8k^2\sum_{l< k} w_lP\\
    &\leq kw_{k}\left(\frac{1}{2}N-N'\right) + \frac{1}{2}kC_k\sum_{l< k} w_lN+8k^2\sum_{l< k} w_lP\\
    &< kw_{k}\left(\frac{1}{2}N-N'\right),
\end{align*}where the last inequality is by the value of $C_k$ and $N+P<0$.

Now, by the inequality above and the assumption $y(\overline{S_i})\geq 0$, we conclude $N'<N/2$. For each $i\in I'$ and $j\leq \frac{m'}{2k}$, we choose $S_{ij}\in \mathcal{I}(K_m,W)$ such that the lengths of its $w_1$-weighted edges listed following the cyclic order are
\[\underbrace{j,j,\dots,j}_{\text{$k-1$ times}}\ ,\ i\ ,\ \underbrace{j,j,\dots,j}_{\text{$k-1$ times}}\ ,\ i+(2k-2)j.\]
See Figure~\ref{fig:hex}. Note that $i+(2k-2)j\leq m'$ in this definition. Again, $\overline{S_{ij}}$ is uniquely determined and a similar analysis of the edge-lengths gives
\[\sum_{j=1}^{m'/2k}y(\overline{S_{ij}})\leq \frac{m'}{2k}w_1 y_i + 6k\sum_{l} w_lP.\]
Hence, as $|I'|<k$ and $N'<N/2$, we have
\[\sum_{i\in I'}\sum_{j=1}^{m'/2k}y(\overline{S_{ij}}) < \frac{m'}{2k}w_1 N'+ 6k^2\sum_{l} w_lP< \frac{m'}{2k}w_1 \frac{N}{2}+ 6k^2\sum_{l} w_lP<0,\]
where the last inequality is by the value of $m'$ and $N+P<0$. This contradicts the assumption that $y(\overline{S_{ij}})\geq 0$ for all $S_{ij}$, so we conclude the proof.
\end{proof}

\section{Proof of Theorem~\ref{ordered}}\label{sec:ordered}
Some details of this proof are abridged due to their similarity to previous arguments.
\begin{proof}[Proof of Theorem~\ref{ordered}]
Let $G$ be the given ordered graph. If $\chi_<(G)=2$, a result of Pach and Tardos~\cite{pach2006forbidden} states that
\[\text{ex}_{<}(n,G)=\left(1-\frac{1}{\chi_<(G)-1}\right)\binom{n}{2}+o(n^2)=o(n^2),\]
where $\text{ex}_{<}(n,G)$ is the maximum number of edges in a $n$-vertex ordered graph that doesn't contain $G$ as an ordered subgraph. Similar to the $\chi_c(G)=2$ case in Theorem~\ref{main}, we conclude $G$ is packable.

If $\chi_<(G)=3$, let $\{V_1,V_2,V_3\}$ be an interval partition of $V(G)$ with every edge in $E(G)$ having endpoints in different parts. Write $e_{ij}=|E_G(V_i,V_j)|$ for $1\leq i<j\leq 3$. For each positive integer $n$, the \emph{irregular blow-up} $\tilde{K}_3[n]$ is an ordered graph defined as follows: the vertex set $V(\tilde{K}_3[n])$ consists of three intervals $I_1,I_2,I_3$ in order, of size $e_{12}e_{13}n$, $e_{12}e_{23}n$, $e_{13}e_{23}n$ respectively; the edge set $E(\tilde{K}_3[n])$ consists of all pairs with endpoints in different parts.

First we prove that given arbitrary $\epsilon>0$, there exists $n_\epsilon$ such that, for all $n\geq n_\epsilon$ there is a $G$-packing of $\tilde{K}_3[n]$ covering all but an $\epsilon$-fraction of edges. Consider the hypergraph $\mathcal{H}$ whose vertex set $\mathcal{V}$ consists of all edges of $\tilde{K}_3[n]$ and edge set $\mathcal{E}$ consists of all $E(G')$ where $G'$ is a subgraph of $\tilde{K}_3[n]$ satisfying\begin{itemize}
    \item there exists an isomorphism $f:G\to G'$ such that with $f(V_i)\subset I_i$ for $i=1,2,3$; and
    \item there exists $\delta\in [1,\log n]$ such that for $i=1,2,3$, every consecutive pair of vertices in $V(G')\cap I_i$ has length $\delta$ in $\tilde{K}_3[n]$,
\end{itemize} Using similar arguments as in the proof of Lemma~\ref{frac_pack_lemma}, we can check inside $\mathcal{H}$: All $x\in \mathcal{V}$ but $O(n\log n)$ of them have degree $d(x)=(1\pm o(1))e_{12}e_{13}e_{23}n\log n$; All $x\in \mathcal{V}$ have degree $d(x)<2e_{12}e_{13}e_{23}n\log n$; Any two distinct $x_1,x_2\in \mathcal{V}$ have codegree $d(x,y)<O(e_{12}e_{13}e_{23}n)$. Therefore, by Theorem~\ref{Pippenger_Spencer}, there exists $n_\epsilon$ with the claimed property.

Now we fix an arbitrary $\epsilon>0$ and describe a recursive construction of a $G$-packing of $K_n$. Firstly, we partition $V(K_n)$ into four intervals $I_1,I_2,I_3,I_4$ in order, where $I_1,I_2,I_3$ have sizes $e_{12}e_{13}n'$, $e_{12}e_{23}n'$, $e_{13}e_{23}n'$ respectively, for a unique $n'$ such that $I_4$ has size less than $q:=e_{12}e_{13}+e_{12}e_{23}+e_{13}e_{23}$. Then we can regard the subgraph of $K_n$ with vertices $I_1\cup I_2\cup I_3$ and edges $E_{K_n}(I_1,I_2)\cup E_{K_n}(I_1,I_3)\cup E_{K_n}(I_2,I_3)$ as $\tilde{K}_3[n']$. If $n'< n_\epsilon$, we do nothing and end this construction. Otherwise, we find a $G$-packing of this $\tilde{K}_3[n']$ that covers all but an $\epsilon$-fraction of edges, and we recursively repeat this process on the induced subgraphs on $I_1,I_2,I_3$ respectively. Finally, collecting all the produced edge-disjoint $G$-copies gives the resulting $G$-packing of $K_n$.

For $n$ sufficiently large, the uncovered edges of above $G$-packing are in either one of the following categories:\begin{itemize}
    \item Edges with a endpoint in $I_4$ at some stage. There are $O(n\log n)$ many of them;
    \item Edges in some $\tilde{K}_3[n']$ with $n'< n_\epsilon$. There are at most $\frac{n}{qn_\epsilon}\binom{qn_\epsilon}{2}=O(n)$ many of them.
    \item Edges in some $\tilde{K}_3[n']$ with $n'\geq n_\epsilon$ but not covered by the constructed $G$-packing. There are at most $\epsilon \binom{n}{2}$ many of them.
\end{itemize} Therefore, there exists a $G$-packing of $K_n$ covering all but $\epsilon n^2$ edges. By the arbitrariness of $\epsilon$, we conclude that $G$ is packable.
\end{proof}

\section{Final remarks}\label{sec:remark}
\noindent 1. There is a polynomial time randomized greedy algorithm for the concluded matching in Theorem~\ref{Pippenger_Spencer} \cite{rodl1996asymptotic}. It is also well-known that linear programs are solvable in polynomial time \cite[Chapter 7]{matousek2006understanding}. Hence there exist polynomial time randomized algorithms for the packings asserted by our theorems.

\medskip

\noindent 2. We believe our methods could also prove Theorem~\ref{higher} where the interval partition $\mathcal{P}$ has an odd size. But such a proof probably requires new constructions like $S_i\in \mathcal{I}(K_m,W)$ in Lemma~\ref{Kk_frac_pack}.

\medskip

\noindent 3. Is it true that for any cgg $G$ with $\chi_c(G)=k\geq 5$, there exist $G$-packings of $K_n$ covering a $(1-o(1))4/k$-fraction of edges? If this is possible, it is going to be asymptotically tight, by an average length argument.

\medskip\noindent {\bf Acknowledgement.} We want to thank Daniel Cranston, Andrew Suk, Jacques Verstra\"ete, Alexandra Wesolek, and Yunkun Zhou for helpful discussions. We also wish to thank every organizers of Graduate Student Combinatorics Conference 2022 where this work is initiated.

\bibliographystyle{abbrv}
\bibliography{bib}
\end{document}